\newtheorem{thm}{Theorem}[section]
\newtheorem{prop}[thm]{Proposition}
\newtheorem{deff}[thm]{Definition} 
\newtheorem{rem}[thm]{Remark}
\newcommand{\BZ}{{\mathbb{Z}}}
\newcommand{\BC}{{\mathbb{C}}}
\newcommand{\BQ}{{\mathbb{Q}}}
\newcommand{\CE}{{\mathcal{E}}}
 \newcommand{\CL}{{\mathcal{L}}}
\newcommand{\CI}{{\mathcal{I}}}
\newcommand{\CN}{{\mathcal{N}}}
\newcommand{\CQ}{{\mathcal{Q}}}
\newcommand{\fE}{{\mathfrak{E}}}
\newcommand{\al}{{\alpha}}
\newcommand{\be}{{\beta}}
\newcommand{\de}{{\delta}}
\newcommand{\ga}{{\gamma}}
\newcommand{\si}{{\sigma}}
\newcommand{\ep}{\epsilon}
\newcommand{\De}{{\Delta}}
\newcommand{\Ga}{{\Gamma}}
\newcommand{\fe}{{\mathfrak{e}}}
\newcommand{\fc}{{\mathfrak{c}}}
\newcommand{\fv}{{\mathfrak{v}}}
\newcommand{\fw}{{\mathfrak{w}}}
\newcommand{\fZ}{{\mathfrak{Z}}}
\newcommand{\fJ}{{\mathfrak{J}}}
\DeclareMathOperator{\Int}{Int}
\DeclareMathOperator{\Lk}{Lk}
\DeclareMathOperator{\Sign}{Sign}
\DeclareMathOperator{\SignE}{Sign_{\CE}}
\DeclareMathOperator{\nullity}{nullity}
\DeclareMathOperator{\sig}{sig}
\DeclareMathOperator{\nul}{nul}
\DeclareMathOperator{\pari}{parity}
\newcommand{\rp}{{\mathbb{R}}P(2)}
\newcommand{\cp}{{\mathbb{C}}P(2)}
\begin{document}

\title{Signatures of real algebraic curves via plumbing diagrams}

\author{ Patrick M. Gilmer}
\address{Department of Mathematics\\
Louisiana State University\\
Baton Rouge, LA 70803\\
USA}
\email{gilmer@math.lsu.edu}
  \urladdr{www.math.lsu.edu/\textasciitilde gilmer/}

\author{ Stepan Yu. Orevkov}
\address{IMT, Univertit\'e Paul Sabatier,
118 route de Narbonne,
31061 Toulouse, France
and \\
Steklov Math. Inst.\\ Moscow, Russia\\
and
National Research University Higher School of Economics\\
Russian Federation. AG Laboratory, HSE\\
Vavilova 7\\Moscow, Russia,
117312}
\email{orevkov@picard.ups-tlse.fr }
\urladdr{http://www.math.univ-toulouse.fr/~orevkov/}

\thanks{The first author was partially supported by  NSF-DMS-0905736,  NSF-DMS-1311911. 
The second author was partially supported by
RSF grant, project 14-21-00053 dated 11.08.14} 

\date{November 24, 2017}

\begin{abstract} We define and calculate signature and nullity invariants for complex schemes for curves in $\rp$. We use 
an analog of
the Murasugi-Tristram inequality to prohibit certain schemes from being realized by real algebraic curves.
We  give new formulas for  Casson-Gordon invariants of graph manifolds, and  signatures of graph links
\end{abstract}

\maketitle
\setcounter{tocdepth}{2}

\section{Introduction} \label{sec.intro} 
\subsection{Real algebraic curves  and associated links}\label{DG}
A nonsingular real algebraic curve in $\rp$ consists of a collection of disjoint simple smooth closed curves.
Such a collection whether coming  from a real algebraic curve or not consists of a number of 2-sided curves called ovals, and at most one $1$-sided curve. We  call 
the isotopy type of such a collection of curves a real scheme.  A real algebraic curve includes a 1-sided component if and only if it has odd degree. We say a scheme has odd or even type accordingly to whether the scheme includes a  1-sided component or does not include a 1-sided curve.  A  real algebraic curve is called dividing  
if it separates its complex locus in $\cp$ into two components. In this case, the real part inherits from the complex curve a semi-orientation which is an orientation of each component, up  to simultaneous reversal of all components. This is called a complex orientation of the real algebraic curve. A real scheme equipped with a semi-orientation is called a complex scheme.

Let $\tilde D$ be the complement in $\cp$ of a small open tubular neighborhood of
$\rp$ which is invariant with respect to complex conjugation. Let  $\tilde Q =\partial \tilde D$ and let $D$ and $Q$ be the quotient of $\tilde D$ and $\tilde Q$ respectively by 
complex conjugation. It is convenient to assume that the tubular neighborhood is
``infinitely small''. 
Rigorously speaking, this means that $\tilde D$ is $\cp$ (viewed as a
real variety) blown up at $\rp$ and then cut along the exceptional divisor. Then
$Q$ is the projective normal bundle of $RP(2)$ which we identify with the projective
tangent bundle of $\rp$ by sending each tangent vector $v$ to $iv$, where $i =
\sqrt {-1}$.  
Under this identification, $D$ is the projective
tangent disk bundle of $\rp$.

 Let $A$ be an embedded  surface in $\cp$, 
 invariant under complex conjugation
 and such that the tangent bundle to $A$ restricted to $C$ is the complexification of the tangent bundle to $C$ where $C = A \cap \rp$. Then $A \cap \tilde Q$  is a link in $\tilde Q$ which is
determined by $C$.  We denote its orbit in $Q$ under complex conjugation by $L(C)$. 
One may describe $L(C)$ more directly as follows:
To each point $x$ of $C$, we have the tangent line $l_x$ to $C$ at $p$. The pair $(x,l_x)$ is a point in $Q$ the projective tangent bundle of $\rp$. 
One defines $L(C)= \{  (x,l_x)| x \in C\}$. It is a link in $Q$ with one component lying over each component of $C$ in $\rp$. 
A semi-orientation on $C$ induces a
semi-orientation on $L(C)$.   
{\it  We fix a single orientation on $C$ compatible with its semi-orientation, and take the associated orientation of $L(C)$.  This does not cause any problem as the signatures and nullities that we study are preserved if we reverse the orientation on all components of a link.}

The complex locus of a real algebraic curve (which we may assume is nonsingular as a complex curve) is  an embedded surface 
of genus  $(m-1)(m-2)/2$.
Thus a necessary condition for realizability of a complex
scheme $C$ by a real algebraic dividing curve of degree $m$ is that $L(C)$ bounds a
connected orientable surface $G$ (the orbit space of $A$ under complex orientation) properly embedded in $D$ with $\beta_1(G) = (m-1)(m-2)/2$.  
This condition is also necessary
for the
realizability
of $C$ by a flexible  dividing curve in the sense of Viro [V1].
{ \it All the restrictions on real algebraic curves that we discuss in this paper also hold for flexible curves. But, for brevity, we only mention this here.}

 In [G2], the first author defined signature and nullity invariants associated to $L(C)$. They were
 shown to satisfy an inequality deriving from an extension of the Murasugi-Tristram inequality for links in $S^3$. In this paper, we will describe $L(C)$ as a particularly simple link with respect to a plumbing diagram for $Q$ which is tailored to $C$.  Moreover, we will give formulas which allow the computation of the signature and nullity invariants of $L(C)$. In this way, we will prohibit certain complex schemes from arising as  complex orientations of dividing  real algebraic curves of specified degree. 
If we chose the opposite orientation on $C$ and $L(C)$, the signatures and nullities would remain the same.

Similar techniques can be applied to some related links that arise in $L(4,1)$ and $L(2,1)$  as covering spaces  of $Q$. This is work in progress.

\subsection{Nomenclature and numerical characteristics}
  
We collect here terminology and the definitions of numerical characteristics which are assigned to a scheme $C$.

\begin{itemize}
\item We use $l(C)$ to denote the number of ovals in $C$.

\item If $C$ has odd type,  we denote the 1-sided component  by  $J$.  If $C$ has even type,   we pick a one-sided curve and give it an arbitrary orientation. This auxiliary curve will also be denoted by $J$.  Thus we have a oriented one-sided curve $J$  in the complement of the ovals of $C$ which we can use as a reference. 

\item Abusing notation, we usually  let  $C$
indicate  either $C$ in the case $C$ has odd type, or $C$ together with a choice of $J$ in the case $C$ has even type. In this last case,
the isotopy class of the complex scheme $C \cup J$ usually depends on the choice of $J$. Thus if $C$ has even type,  further constructions  
depend on the  placement of  $J$. 
By  Theorem \ref{pplumb} below,  the signatures and nullities that we calculate for $C$ using 
$C \cup J$  are independent of the choice of $J$.

\item   An oval is said to be negative, if  the oval is homologous to twice the one sided curve in the complement of a point interior to the disk in the complement of the oval. Otherwise we say the oval is positive.  This counter-intuitive choice of words is traditional in the subject of real algebraic curves.  For an oval $o$ of $C$, let $\ep(o)$ be $1$ (resp. $-1$) if the oval is positive (resp. negative).

\item We will call the region of the complement  of $C \cup J$ which meets $J$ the outer region and denote it by $R_1$.
The other components of the complement  of $C \cup J$ will also be called regions.

\item If one must cross an even number of other ovals  to pass from a region to $R_1$, we say the region  is even and otherwise we say an  region is odd. 
We will refer to this evenness or oddness of a region $R$ as the parity of the region, and write $\pari(R) \in \{0,1\}$ accordingly. 

\item The ovals which meet $R_1$ are called outer ovals.

\item If one must cross an even number of other ovals  to pass from an oval to $R_1$, we say the oval  is even and otherwise we say an  oval is odd. 
We will refer to this evenness or oddness of an oval $o$ as the parity of an oval, and write $\pari(o) \in \{0,1\}$ accordingly. 

\item We let $\Lambda^+(C)$ ($\Lambda^-(C)$) denote the number of positive (respectively negative) ovals of $C$.

\item  Two ovals are said to form an injective pair if they are the boundary of an annulus in $\rp$. An injective pair of oriented ovals is said to be  positive (negative) if their orientation is  (respectively not) induced from an orientation on the annulus they bound. 
We let $\Pi^+(C)$ ($\Pi^-(C)$) denote the number of positive (respectively negative) injective pairs of ovals in $C$. 
\item For each region $R$, we let $\Lambda^+_R(C)$ (resp. $\Lambda^-_R(C)$) denote the number of positive (resp. negative) ovals of $C$ which encircle $R$ (or equivalently encircle an arbitrarily chosen point in $R$). 

\item For each oval  $o$, we let $\Pi^+_o(C)$ (resp. $\Pi^-_o(C)$) denote the number of ovals of $C$
which form positive (resp. negative) injective pairs with $o$.

\end{itemize}

We frequently omit  $C$ from the above notations when it should cause no confusion.

\subsection{Plumbing description of $L(C)$}
We begin by giving a plumbing description of the links $L(C)$. This description is inspired by the graph links of Eisenbud and Neumann \cite{EN}. The theory of graph links was  developed only for links in  integral homology spheres whereas our manifold, $Q$, is a rational homology sphere. Our particular case is  elementary and can be explained straight forwardly using Kirby calculus diagrams \cite{K}.

The  decorated weighted graph $\Gamma(C)$  for the plumbing diagram will have one vertex for each region $R$ of the complement  of $C \cup J$.  Such a vertex will be denoted by $R$, as well and be given, as weight, twice the Euler characteristic of $R$. For each oval $o$ in $C$, we have another vertex,  also denoted $o$, and weighted zero.  An edge connects $R$ and $o$ whenever $o$ is in the boundary of $R$. We have three more vertices $u_1$, $u_2$ and $u_3$ weighted   $1$, $2$ and $2$ in the graph. There are also three more edges one connecting $u_1$ to $u_2$, one connecting $u_1$ to $u_3$ and one connecting $u_1$ to the vertex associated to $R_1$.

Then we further decorate the graph with  signed arrows, one for each component of $C$.
If $C$ contains a one-sided curve, we attach a positive arrow to $u_2$. Otherwise we do not attach any arrow to $u_2$.  Now we can  add  signed arrows to all the vertices that are indexed by ovals. The sign of the arrow at the vertex indexed by  $o$ is given by 
$(-1)^{\pari(o)+1}\ep(o)$.

  \begin{figure}[h]
\includegraphics[width=2in]{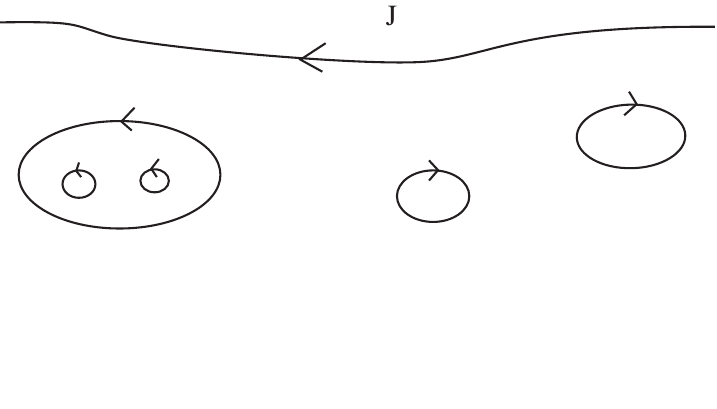} \quad \includegraphics[width=2in]{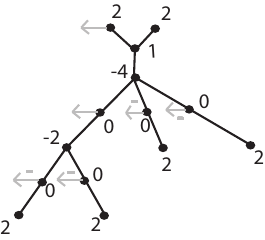}
\caption{On the left is a scheme $C$  of odd type  with five ovals and a 1-sided curve. On the right is the associated decorated graph $\Gamma(C)$. The six arrows represent the components of $L(C) \subset Q.$ The arrows are  assumed to have  the sign $+$, unless they are marked with a minus sign.   The weighted graph  is a plumbing diagram for $Q$. This complex scheme was chosen simply to illustrate some features of the construction of $\Gamma(C)$  and still not be too complicated.
This  complex scheme  cannot be the scheme for a dividing  real algebraic curve   as it does not satisfies the Rohlin-Mishachev restriction \cite[2.4]{R}  for a dividing curve of any degree.} \label{example}
\end{figure}

The plumbing diagram with the weights but without the signed arrows is a recipe to build a 4-manifold by gluing together oriented 2-disk bundles over 2-sphere bases \cite{HNK}. These 2-sphere bases are represented by the vertices. The weights are the Euler numbers of the  2-disk bundles. The edges, which form a tree in our case, describe the plan for plumbing these bundles together.  The boundary of this 4-manifold is homeomorphic to $Q$. The positive arrows depict oriented fibers of the associated 
circle bundle.  The negative arrows depict these  fibers with the opposite orientation. In both cases, these fibers are to be taken over points of the 2-disks which are the bases for the copies of $D^2 \times D^2$  used in the plumbing. Thus these arrows describe an oriented link $\CL(\Ga (C))$ in  $Q$. The plumbing matrix for $\Ga(C)$ is the symmetric matrix with rows and columns indexed by the vertices of $\Ga(C)$ whose diagonal entires are the weights and whose off-diagonal entries are one exactly when  the two vertices are connected by an edge, and are zero, otherwise. We note that the number of vertices of $\Gamma(C)$  is $2 l(C) +4$.

\begin{thm}\label{pplumb} $L(C)$ and $\CL(\Ga( C))$ describe the same oriented link in $Q$. 
\end{thm}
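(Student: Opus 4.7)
The plan is to construct an identification of the boundary of the plumbing $4$-manifold for $\Ga(C)$ with $Q$ that sends $\CL(\Ga(C))$ to $L(C)$. I would proceed by induction on $l(C)$, followed by a separate verification of the signs.

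\emph{Base case.} When $C$ consists only of the one-sided curve $J$, the region $R_1 = \rp \setminus J$ is an open disk with $\chi(R_1)=1$, and $\Ga(C)$ reduces to the four-vertex $Y$-graph with vertices $R_1,u_1,u_2,u_3$ of weights $2,1,2,2$. I verify directly via Kirby calculus that this plumbing has boundary $Q$: decomposing $\rp$ as the closed disk $\overline{R_1}$ glued to a M\"obius band neighborhood of $J$, the projective tangent bundle restricted to $\overline{R_1}$ is a solid torus corresponding to the vertex $R_1$, while its restriction to the M\"obius band is a Seifert piece with one exceptional fiber over $J$, presented by the branch $u_2 - u_1 - u_3$ with weights $2,1,2$. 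Under this identification the tangent lift $L(J)$ is a regular fiber of the Seifert fibration on the $u_2$ side of the exceptional fiber, and corresponds precisely to the positive arrow at $u_2$.

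\emph{Inductive step.} For $l(C) \geq 1$, choose an innermost oval $o$ of $C$, meaning one whose complementary disk side $R_o$ in $\rp \setminus (C \cup J)$ meets no other oval. Let $C' = C \setminus o$; since $o$ is innermost, the parities and signs of all other ovals are unaffected. Then $\Ga(C')$ is obtained from $\Ga(C)$ by deleting the pendant pair $(o,R_o)$ (in which $R_o$ is a leaf of weight $2$ and $o$ has degree $2$ and weight $0$) and adding $2$ to the weight of the other neighbor $R$ of $o$, reflecting the merging of $R$, $o$, and $R_o$ into a single region of Euler characteristic $\chi(R)+1$. This combinatorial reduction corresponds to a standard plumbing move that preserves the boundary $3$-manifold. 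By induction, the boundary of the plumbing for $\Ga(C')$ is identified with $Q$ in a way that carries $\CL(\Ga(C'))$ to $L(C')$. To extend to $\Ga(C)$, reintroduce the pair $(o,R_o)$ and check that the new arrow at $o$ is isotopic to the tangent lift of $o$: in a disk neighborhood of $o$ the projective tangent bundle $Q \to \rp$ is naturally framed, and the arrow at $o$ corresponds under the inverse plumbing move precisely to the section given by tangent directions along $o$.

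\emph{Sign verification.} This is the main obstacle. The sign $(-1)^{\pari(o)+1}\ep(o)$ decomposes into two independent contributions. The factor $\ep(o)$ encodes whether $o$ is positive or negative in the traditional sense and fixes the local orientation of the tangent lift against a canonical fiber orientation. The factor $(-1)^{\pari(o)+1}$ records the cumulative orientation reversal accumulated as one propagates a chosen fiber orientation over $R_1$ outward along the unique path in the plumbing tree to $o$: each crossing of an oval, equivalently each oval vertex traversed along that path, corresponds to a twist that reverses the convention for the positive fiber. To pin this down I would fix the orientation of fibers over $R_1$ coming from the identification $v \mapsto iv$, compute explicitly the sign flip incurred when crossing a single oval, and iterate along the path; combining this with the inductive construction then yields the stated formula.
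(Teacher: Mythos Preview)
Your overall strategy---induction on the number of ovals by deleting an innermost one---is exactly the paper's approach, but the inductive step as written has a genuine gap.

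The key missing ingredient is this: the tangent lift of an oval $o$ is \emph{not} simply ``the section given by tangent directions along $o$'' viewed as a fiber of the plumbing.  Concretely (this is the content of \cite[pp.~59--60]{G2}), the component of $L(C)$ over $o$ is a $(2,1)$-cable of a push-off $f'$ of a fiber $f$ in the adjacent region, where $\Lk(f,f')=0$.  What must be checked is that after the plumbing move you describe (adding the zero-weighted vertex $o$ and the weight-$2$ leaf $R_o$), a meridian to the new zero-framed component is isotopic in $Q$ to such a $(2,1)$-cable.  The paper does this by an explicit handle slide (its Figure~\ref{ill}).  Your sentence ``the arrow at $o$ corresponds under the inverse plumbing move precisely to the section given by tangent directions along $o$'' asserts exactly this conclusion without supplying the mechanism.

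Relatedly, for the induction to run you need to know \emph{where in $Q$} the fibers over the region vertices sit---otherwise you cannot locate the $(2,1)$-cable that is to become the new link component.  This is why the paper first strengthens the statement to one about the larger link $\hat L(C)$ (adding a fiber over a chosen point in every region, with specified orientation) and simultaneously carries along the fact that two fibers over the same region vertex have linking number~$0$.  That self-linking condition is what makes ``a $(2,1)$-cable of a zero-framed push-off of $f$'' well-defined up to isotopy.  Your induction hypothesis does not record either piece of data, so as stated it is too weak to support the step.

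Finally, on the signs: the paper's mechanism for the parity factor $(-1)^{\pari(o)+1}$ is not a propagated twist along a path, but a single concrete observation coming out of the handle slide---a positive meridian to the curve framed $n$ in the ``before'' picture becomes a \emph{negative} meridian to the curve framed $2$ in the ``after'' picture.  Iterating this across nested ovals produces the alternation.  Your description of the sign as a cumulative fiber-orientation reversal is morally compatible with this, but would require the same local computation to be made precise; you correctly flag this as the main obstacle.
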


\subsection{Signatures and nullity of complex schemes $C$}

In Definition \ref{defsig}, for every  complex scheme $C$ and every odd prime $p$, and every $0< b \le (p-1)/2$, we define  a signature, $\sig_{b/p}(C)\in \BZ$, and a nullity, $\eta_{p}(C)\in \BZ_{\ge 0}$.

\begin{prop}{\label{signulprop}}
\begin{equation} \sig_{b/p}(C) +\eta_{p}(C) = \beta_0(C)-1 \pmod{2} \label{snc}
\end{equation}
\begin{equation}0 \le \eta_{p}(C) \le \beta_0(C)-1 .  \label{nb}
\end{equation}
\end{prop}

Here $\beta_0(C)$ is the 0-th Betti number of $C$, i.e. the number of components of $C$.
The following theorem can be viewed as analog  of the Murasugi-Tristram inequalities.   
It is a rephrasing  of \cite[Theorem(9.7)]{G2}. 

\begin{thm}\label{bound}
If $C$ is the real part of a dividing real algebraic curve of degree $m$ with its complex orientation, 
$p$ is an odd prime and  $0< b \le (p-1)/2$,  then
$$|\sig_{b/p}(C)|+\eta_{p}(C) \le (m-1)(m-2)/2.$$
\end{thm}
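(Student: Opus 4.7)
The plan is to realize $L(C)$ as the oriented boundary of an orientable surface $G$ properly embedded in $D$ with $\beta_1(G) = (m-1)(m-2)/2$, and then to apply a Murasugi--Tristram type inequality for links in the rational homology sphere $Q$ bounding such a surface. The target inequality has the form $|\sig| + \eta \le \beta_1$ of the bounding surface, so once $G$ is in hand the remaining task is to interpret the intrinsic invariants $\sig_{b/p}(C)$ and $\eta_p(C)$ as the signature and nullity of a twisted intersection form on a branched cover, and then to bound the total rank of that form.

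\textbf{Step 1: the bounding surface.} If $A \subset \cp$ is the complex locus of the dividing real algebraic curve of degree $m$, then $C = A \cap \rp$ separates $A$ into two complex conjugate halves. Restricting one half to $\tilde D$ and descending to $D$ under complex conjugation yields a connected orientable surface $G \subset D$, properly embedded, with $\partial G = L(C)$ (carrying its complex orientation) and $\beta_1(G) = (m-1)(m-2)/2$. This is the geometric input already recorded in the introduction.

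\textbf{Step 2: branched covers and the dimension bound.} Fix an odd prime $p$ and $0 < b \le (p-1)/2$. Form the $p$-fold cyclic cover $\hat D \to D$ branched along $G$. The order-$p$ deck transformation acts on $H_2(\hat D;\BC)$, decomposing it into eigenspaces $V_c$, $0 \le c < p$, for the characters $\zeta \mapsto \zeta^c$. The intersection form on $\hat D$ restricts to a Hermitian form on each $V_c$. By the construction of the invariants in section \ref{section.bound}, which is a rephrasing of \cite[Theorem(9.7)]{G2}, one identifies $\sig_{b/p}(C)$ with the signature and $\eta_p(C)$ with the nullity of the restricted form on $V_b$, modulo the normalization conventions encoded in \eqref{snc} and \eqref{nb}. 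The theorem then follows from
$$
|\sig_{b/p}(C)| + \eta_p(C) \;\le\; \dim_{\BC} V_b \;\le\; \beta_1(G) = (m-1)(m-2)/2,
$$
where the first inequality is the trivial bound on signature plus nullity of a Hermitian form by total dimension, and the second comes from a Mayer--Vietoris computation along $\hat D = \pi^{-1}(N(G)) \cup \pi^{-1}(D \setminus N(G))$ expressing $V_b$ for $b \ne 0$ in terms of twisted homology of $G$.

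\textbf{Main obstacle.} The technical heart is the identification in Step 2 of the intrinsically defined invariants $\sig_{b/p}(C)$ and $\eta_p(C)$, which depend only on $L(C) \subset Q$, with the eigenspace signature and nullity of the intersection form on the branched cover $\hat D$. Because $Q$ is only a rational homology sphere, the branched cover $\hat Q = \partial \hat D$ can contribute extra $H_1$, and this is precisely what forces the parity congruence \eqref{snc} and the bound \eqref{nb} on $\eta_p(C)$. Making this identification rigorous---essentially adapting the classical Murasugi--Tristram argument to the present rational-homology-sphere setting---is the content of \cite{G2} that must be translated into the plumbing language of the current paper.
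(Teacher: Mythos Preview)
Your proposal is correct and follows essentially the same route as the paper: realize $L(C)=\partial G$ with $G\subset D$ connected, orientable, $\beta_1(G)=(m-1)(m-2)/2$, pass to the $\BZ_p$-branched cover of $(D,G)$, and bound $|\SignE|+\nullity$ of the eigenspace intersection form by $\dim\CE_2(D)=\beta_1(G)$. The paper fills in precisely the points you flag as the ``main obstacle'': it uses the Turaev--Viro estimates to get $\CE_1(D)=\CE_3(D)=0$ (hence $\dim\CE_2(D)=(m-1)(m-2)/2$ via Euler characteristic), then the long exact sequence of $(D,Q)$ to identify $\eta_p$ with the nullity, and finally observes $\Sign(D)=0$ and $G\circ G'-\Lk(\partial G,\partial G')=0$ so that $\sigma_{2b/p}(L(C))=\SignE(D)$ with no correction term.
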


\subsection{Algorithm to calculate $\sig_{b/p}(C)$ and  $\eta_{p}(C)$\label{al}}
We now describe an algorithm for the calculation of $\sig_{b/p}(C)$ and  $\eta_{p}(C)$ with input 
$\Ga( C)$.

If $x\in \BZ$, let $r_p(x)$ denote the unique integer congruent to $x$ modulo $p$ in the range $[0,p-1]$.  If $\vec x$ is instead a vector in $\BZ^n$, for some $n$, $r_p(\vec x)$  is defined in this same way but  entry by entry.
If a vector $\vec x$ has entries indexed by the vertices $\fv$ of a  graph, we let $x_\fv$ denote the $\fv$-coefficient of $x$. 
If $Y$ is a  graph, we let $v(Y)$ be the vertices of $Y$. If $Y$ is a plumbing graph (ie. a graph with the vertices weighted by integers),
let $A_Y$ be the matrix whose rows and columns are indexed by $v(Y)$ and that  has for entries on the diagonal the weights and  for entries  off the diagonal has a  $1$ exactly when two vertices are joined by an edge. We let $|S|$ denote the cardinality of a finite set $S$.

Every symbol that we introduce here depends on $C$ implicitly.

\begin{itemize}

\item Let $\Ga$ stand for $\Gamma(C)$.

 \item Let $\vec s$ be a row vector in $\BZ^{|v(\Ga)|}$ with entries indexed by the vertices  $v(\Ga)$:
\begin{equation*}
\vec s_\fv=
\begin{cases} 1 &\text{if $\fv$ is the tail of a positive arrow } \\
-1 &\text{if $\fv$ is the tail of a negative  arrow }\\
0 &\text{if $\fv$ is not the tail of any  arrow. }
\end{cases}
\end{equation*}.

More explicitly:
\begin{equation*}
\vec s_\fv=
\begin{cases} 
0 &\text{if $\fv=u_2$ and $C$ is of even type} \\
1 &\text{if $\fv=u_2$ and $C$ is of odd type} \\
0 &\text{if  $\fv$ is $u_1$, $u_3$, or a region} \\
(-1)^{\pari(o)+1}\ep(o) &\text{if $\fv$ is an oval  $o$. } 
\end{cases}
\end{equation*}

\item Let $\De= 2 \vec s {A_\Ga}^{-1} {\vec s\ }^t \in  \BZ.$

\item Let $\vec c= -2 \vec s  {A_\Ga}^{-1}  \in  \BZ^n.$

\item Let $\Gamma^+$ be the plumbing graph obtained by converting all arrows of $\Gamma(C)$ to edges and all arrowheads to vertices weighted zero. We note that the number of vertices of $\Gamma^+$ for  a curve of odd type is $3 l(C) +5.$
For a curve of even type, it is one less.

\begin{figure}[h]
\includegraphics[width=2in]{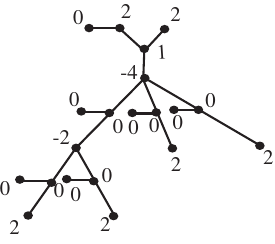} 
\caption{$\Ga^+$ for complex scheme on the left of Figure \ref{example} \label{ex4}} 
\end{figure}

\item Let $\vec c^+ $ be the vector in $\BZ^{|v(\Ga+)|}$indexed by  $v(\Ga^+)$ obtained by adjoining to the vector $\vec c$  extra  entries, indexed by the vertices which replace the arrowheads of the arrows, which are  $\pm 2$ according to the signs on the arrows.

\item Let $z_{\vec c, p}$ denote the number of the vertices $\fv$ of $\Ga$ with 
${\vec c}_{\fv}= 0\pmod{p}$. 

\item Let $\fZ_{\vec c, p}$ denote the subgraph of $\Ga$ whose vertices are  the  vertices $\fv$ of $\Ga$ such that ${\vec c}_{\fv}= 0\pmod{p}$ that are not connected by an edge of $\Ga^+$ to any vertex $\fw$ with ${\vec c^+}_{\fw} \ne 0\pmod{p}$, and whose edges are the edges of $\Ga$ connecting these vertices.

\item Let $e_{\vec c, p}$ denote the number of edges in $\Ga^+$  that join two vertices $\fv$ and $ \fw$ of $\Gamma^+$ with ${\vec c^+}_{\fv}\ne 0\pmod{p}$ and ${\vec c^+}_{\fw}\ne 0\pmod{p}$.

\item Let $\fE_{\vec c, p}$ be the number of edges $\fe$ in $\Ga^+$ with at least one endpoint 
$\fv$  of $\fe$ with ${\vec c^+}_{\fv}= 0\pmod{p}$.

 \end{itemize}

 \begin{thm}\label{form}  For every odd prime $p$, and every $1 \le b \le \frac {p-1}2$, we have that
 \begin{align*}
  \sig_{b/p}(C) &= \frac 2 {p^2} \left(r_p(b\vec c^+) ({A_{\Ga^+}}) (r_p(-b\vec c^+))^t+ b (p-2b)\Delta\right)+ \Sign(A_{\fZ_{\vec c, p}})-e_{\vec c, p} -2  \\ 
  \eta_{p}(C) &=  \fE_{\vec c, p}+ \nullity(A_{\fZ_{\vec c, p}}) + |v(\fZ_{\vec c, p})|-2 z_{\vec c, p}  . \end{align*}
 \end{thm}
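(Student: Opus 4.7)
The plan is to interpret $\sig_{b/p}(C)$ and $\eta_p(C)$ as the twisted signature and nullity of the plumbing $4$-manifold $W(\Gamma^+)$, whose intersection form is $A_{\Gamma^+}$. By Theorem~\ref{pplumb}, $L(C)=\CL(\Gamma(C))$, and the zero-framed vertices added in passing from $\Gamma$ to $\Gamma^+$ provide a natural embedded surface $F\subset W(\Gamma^+)$ pushed off from $L(C)\subset Q\subset\partial W(\Gamma^+)$. The invariants defined in \cite{G2} should, via the standard framework for signatures of links in rational homology spheres, equal the signature and nullity of the twisted intersection form on $H_2(W(\Gamma^+);\BC_\chi)$, where $\chi\colon H_1(Q\setminus L(C))\to\BZ/p$ sends each meridian of $L(C)$ to $b$, modulo boundary corrections.

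Next I would identify the rational lift $\tilde\chi$ of $\chi$ explicitly. Meridians of the base $2$-spheres of $W(\Gamma)$ present $H_1(Q;\BQ)$ with relation matrix $A_\Gamma$, and asking $\tilde\chi$ to take value $b/p$ on each meridian of $L(C)$ forces its vertex values on $\Gamma$ to be $-(b/p)\vec c$; extending across the arrows to $\Gamma^+$ yields $-(b/p)\vec c^{\,+}$, where the entries $\pm 2$ at the new vertices come from the fact that a $0$-weighted vertex with a single incident edge doubles the signed arrow weight into $\vec c^{\,+}$. The self-pairing $\Delta = 2\vec s\,A_\Gamma^{-1}\vec s^{\,t}$ is then the value of the $\BQ/\BZ$ linking form of $Q$ on the meridian class of $L(C)$; it appears precisely because $Q$ is a rational (not integral) homology sphere, which obstructs an integer lift of $\tilde\chi$.

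Replacing $-(b/p)\vec c^{\,+}_{\fv}$ by its integer residue $r_p(-b\vec c^{\,+}_{\fv})\in[0,p-1]$ at each vertex and diagonalizing $A_{\Gamma^+}$ over $\BQ$ gives, by a direct calculation analogous to standard plumbing-signature formulas, the main term
$$\frac{2}{p^2}\bigl(r_p(b\vec c^{\,+})\,A_{\Gamma^+}\,r_p(-b\vec c^{\,+})^{t}+b(p-2b)\Delta\bigr),$$
the $\Delta$-term accounting for the fractional part of the residue lift. At vertices where $\vec c^{\,+}_{\fv}\equiv 0\pmod p$, the $p$-fold twisted cover degenerates: on the sub-plumbing $\fZ_{\vec c,p}$ the twisted form becomes the ordinary intersection form, contributing $\Sign(A_{\fZ_{\vec c,p}})$ to the signature and $\nullity(A_{\fZ_{\vec c,p}})$ to the nullity. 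The remaining combinatorial terms $-e_{\vec c,p}-2$ and $\fE_{\vec c,p}+|v(\fZ_{\vec c,p})|-2z_{\vec c,p}$ come from the interaction pattern of edges with $\fZ_{\vec c,p}$ and from matching our $4$-manifold invariants to the link-invariant normalization of \cite{G2}.

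The main obstacle is the bookkeeping around the sub-plumbing where the character is trivial. Excising the trivially-covered piece $\fZ_{\vec c,p}$ creates torus boundaries whose signature and nullity defects must be identified with the edge counts $e_{\vec c,p}$ and $\fE_{\vec c,p}$, and one must simultaneously verify that the residual constants (the $-2$ in the signature and the term $|v(\fZ_{\vec c,p})|-2z_{\vec c,p}$ in the nullity) absorb exactly the discrepancy between the twisted signature of the pair $(W(\Gamma^+),F)$ and the link invariant of \cite{G2}, which is defined using the specific $4$-manifold $\tilde D$. Performing this identification by a cobordism between $W(\Gamma^+)$ and $\tilde D$ and tracking each defect precisely is the delicate part of the argument.
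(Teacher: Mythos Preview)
Your strategy --- compute the eigenspace signature and nullity of the $\BZ_p$-cover of the plumbing $W(\Gamma^+)$, split along the sub-plumbing where the character vanishes --- is exactly the engine behind the paper's proof. The paper packages this as two intermediate results: Theorem~\ref{cggraph} computes the Casson--Gordon invariant $\sigma(M_{\Gamma^+},\phi)$ by the decomposition into $\CN_{\vec\fc}$ and $Z_{\vec\fc}$ that you describe, and Theorem~\ref{cggl} relates the link signature $\sigma_{a/p}(L)$ to $\sigma(M_{\Gamma^+},\phi)$ by framed surgery on $L$. Theorem~\ref{form} is then a one-line specialization with $a=2b$, $\vec c^+=2\vec u^+$, $\Delta=2\delta$, and $\Sign(A_\Gamma)=2$ from Theorem~\ref{pplumb+}.

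There is, however, a geometric confusion in your setup. You write $L(C)\subset Q\subset\partial W(\Gamma^+)$, but $Q=\partial W(\Gamma)$ is \emph{not} the boundary of $W(\Gamma^+)$; rather, $W(\Gamma^+)$ is $W(\Gamma)$ with zero-framed $2$-handles attached along $L(C)$, so $Q$ is an interior level and $\partial W(\Gamma^+)$ is the surgered manifold $\CQ^*$. Consequently there is no need to build a cobordism from $W(\Gamma^+)$ to $\tilde D$ and track defects across it. The cleaner path, which the paper takes, is to use that $\sigma_{a/p}(L)$ is defined independently of the bounding pair $(W,G)$: one computes the Casson--Gordon invariant $\sigma(\CQ^*,a\phi)$ once via an abstract $(W,G)$ plus the $2$-handles (giving equation~(\ref{sym})), and once via $W(\Gamma^*)$ and Theorem~\ref{cggraph}; equating the two yields the formula directly, with the constant $-2$ coming from $\Sign(A_\Gamma)$ rather than from any comparison with $\tilde D$. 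Your proposed cobordism detour would work in principle but is harder to execute and is not how the constants actually arise.
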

 
 We implemented the above algorithm in the Mathematica program listed in Appendix \ref{sd}.
 For $C$ the complex scheme in Figure \ref{example} which in Viro's notation is 
$J \amalg 1^- \left<2^-\right> \amalg 2^+ $,
we  plot  $\sig_{b/127}(C)$ against $1 \le b\le 63$ in Figure \ref{ex}.   
  \begin{figure}[h]
\includegraphics[width=2in]{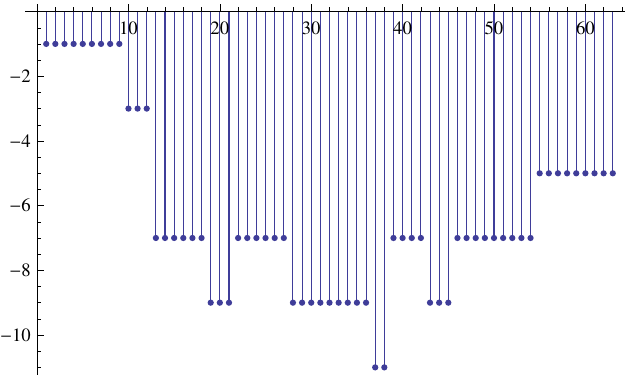} 
\caption{} \label{ex}
\end{figure}

Before we state the next theorem, we need the following definitions.
\begin{itemize}
\item Let $z_{\vec c,\infty}$ denote the number of the vertices $\fv$ of $\Ga$ with 
${\vec c}_{\fv}= 0$. 

\item Let $\fZ_{\vec c,\infty}$ denote the subgraph of $\Ga$ whose vertices are  the  vertices $\fv$ of $\Ga$ such that ${\vec c}_{\fv}= 0$ that are not connected by an edge of $\Ga^+$ to any vertex $\fw$ with ${\vec c^+}_{\fw} \ne 0$, and whose edges are the edges of $\Ga$ connecting these vertices.

\item Let $\fE_{\vec c,\infty}$ be the number of edges $\fe$ in $\Ga^+$ with at least one endpoint 
$\fv$  of $\fe$ with ${\vec c^+}_{\fv}= 0$.

\item Let $\nul(C)$ denote
 $ \fE_{\vec c,\infty}+ \nullity(A_{\fZ_{\vec c,\infty}}) + |v(\fZ_{\vec c,\infty})|-2 z_{\vec c,\infty}.$ 
 \end{itemize}

  \begin{thm}\label{step}
If $C$ is a complex scheme in $\rp$, then
$\sig_{b/p}(C)$ viewed as a function of the numbers $b/p$ can be extended to a step function $\sig_{x}(C)$ for $x \in (0,1/2)$ on the interval $(0,1/2)$ whose only discontinuities  
are rational numbers with 
denominators which divide some non-zero entry of $\vec c$. The function $\eta_p$ restricted to the set of primes coprime to the non-zero entries of $\vec c$  has constant value $\nul(C)$. 
 \end{thm}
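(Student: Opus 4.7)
I would prove the two claims separately. For the nullity: if $p$ is an odd prime coprime to every nonzero entry of $\vec c$, then $c^+_\fv \equiv 0 \pmod p$ if and only if $c^+_\fv = 0$ (for $\fv\in v(\Ga)$ this is the hypothesis, and for arrowhead vertices the entry is $\pm 2$, a unit modulo odd $p$). Consequently $z_{\vec c,p}$, $\fZ_{\vec c,p}$, and $\fE_{\vec c,p}$ agree with $z_{\vec c,\infty}$, $\fZ_{\vec c,\infty}$, and $\fE_{\vec c,\infty}$, so Theorem~\ref{form} immediately yields the claimed constant value for $\nul(C)$.

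For the signature, I would extend $\sig_{b/p}(C)$ to $\sig_x(C)$ for all $x\in(0,1/2)$ by substituting, in the formula of Theorem~\ref{form}, the fractional part $\{y/p\}$ for $r_p(y)/p$, the parameter $x$ for $b/p$, and the condition ``$xc^+_\fv \in \BZ$'' for every congruence ``$c^+_\fv \equiv 0 \pmod p$'' in the definitions of $\fZ_{\vec c,x}$ and $e_{\vec c,x}$. Since $\gcd(b,p)=1$ for $1\le b\le(p-1)/2$, $p$ prime, this recovers $\sig_{b/p}(C)$ at $x=b/p$. Set $\CS := \{x\in(0,1/2) : xc^+_\fv\in\BZ \text{ for some } c^+_\fv \ne 0\}$, a discrete set; the goal is to show $\sig_x(C)$ is constant on every component $I$ of $(0,1/2)\setminus\CS$.

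On such an $I$, the integer parts $n_\fv := \lfloor xc^+_\fv\rfloor$ are constant and $\{\fv : xc^+_\fv\in\BZ\} = \{\fv : c^+_\fv=0\}$, so the ``discrete'' summand $\Sign(A_{\fZ_{\vec c,x}}) - e_{\vec c,x} - 2$ is constant on $I$; it suffices to show the ``smooth'' part
\[
F(x) := \sum_{\fv,\fw}(A_{\Ga^+})_{\fv\fw}\{xc^+_\fv\}\{-xc^+_\fw\} + x(1-2x)\De
\]
satisfies $F'(x)\equiv 0$ on $I$. Setting $\vec\alpha := x\vec c^+ - \vec n$ and letting $\vec\gamma$ denote the indicator vector of $\{\fv : c^+_\fv \ne 0\}$, one has $\{xc^+_\fv\} = \alpha_\fv$ and $\{-xc^+_\fv\} = \gamma_\fv - \alpha_\fv$ on $I$, so the bilinear form equals $\vec\alpha A \vec\gamma^t - \vec\alpha A \vec\alpha^t$ with $A := A_{\Ga^+}$. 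Differentiating in $x$ (with $\vec n$ and $\vec\gamma$ fixed) and using $\vec c^+ A(\vec c^+)^t = -2\De$ (which follows from $\vec c A_\Ga = -2\vec s$ and the block structure of $A_{\Ga^+}$) gives $F'(x) = \vec c^+ A\vec\gamma^t + 2\vec c^+ A\vec n^t + \De$. A direct block calculation shows $(\vec c^+ A)_\fv = 0$ for $\fv\in v(\Ga)$ and $(\vec c^+ A)_\fv = c_\fw$ for an arrowhead vertex $\fv$ with unique neighbor $\fw\in v(\Ga)$; since $\gamma_\fv = 1$ for every arrowhead (entry $\pm 2$), one has $\vec c^+ A\vec\gamma^t = \vec c^+ A \vec 1^t$. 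For $x\in(0,1/2)$ the arrowhead floor $n_\fv$ is $0$ or $-1$ according to the sign of the arrow, and substituting plus using $\vec s\vec c^t = -\De$ (from $\vec c A_\Ga = -2\vec s$) collapses everything to $F'(x) = \vec s\vec c^t + \De = 0$.

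Finally, each $x\in\CS$ has the form $k/c^+_\fv$ with $c^+_\fv\ne 0$; arrowhead contributions $\pm k/2$ lie outside $(0,1/2)$, so the discontinuities of $\sig_x(C)$ have denominators dividing nonzero entries of $\vec c$, as required. The principal technical obstacle is the block-matrix bookkeeping producing the cancellation $F'(x) = 0$; everything else is direct substitution into the formula of Theorem~\ref{form}.
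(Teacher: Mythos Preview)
Your argument is correct and the block--matrix cancellation you outline does go through: the identities $\vec c A_\Ga=-2\vec s$, $B\vec d^{\,t}=2\vec s^{\,t}$ give $(\vec c^+A_{\Ga^+})_\fv=0$ on $v(\Ga)$ and $(\vec c^+A_{\Ga^+})_\fv=c_\fw$ on arrowheads, and together with the arrowhead floor values $n_\fv=(\ep_\fv-1)/2$ on $(0,1/2)$ one gets
\[
F'(x)=\sum_{\fw:\,s_\fw\ne0}c_\fw+\Big(\vec c\,\vec s^{\,t}-\sum_{\fw:\,s_\fw\ne0}c_\fw\Big)+\Delta=\vec c\,\vec s^{\,t}+\Delta=0.
\]
So the extended formula is locally constant off the finite set $\CS$, which is exactly the claim. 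Your nullity argument is identical to the paper's.

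Your route, however, is genuinely different from the one in the paper. The paper does \emph{not} compute $F'(x)$. Instead it first proves an independent result (Theorem~\ref{step2}) that for \emph{any} link $L$ in $Q$ the function $b/p\mapsto\sig_{2b/p}(L)$ extends to a step function on $(0,1/2)$; this is done by doubling $L$ to a null-homologous link $\hat L=L\sqcup L'$, showing $\sig_{2b/p}(L)=\sig_{b/p}(\hat L)$, and then invoking the classical Seifert-matrix signature $\Sign((1-\omega)V+(1-\bar\omega)V^t)$. Having that in hand, the paper simply observes that the plumbing formula extends to a piecewise smooth function with jumps only at $\CS$; since it agrees with a step function on the dense set of $b/p$, it must itself be a step function with jumps in $\CS$.

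What each approach buys: yours is self-contained and purely algebraic---it proves Theorem~\ref{step} directly from Theorem~\ref{form} without any further topology. The paper's detour through Theorem~\ref{step2} is less elementary here but yields strictly more: it applies to arbitrary links in $Q$ (not only graph links), and it simultaneously establishes the parity congruence~(\ref{snc}) and the nullity bound~(\ref{nb}), which your computation does not touch.
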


To remove the ambiguity in the definition of $\sig_{x}$ at those points with non-prime denominators which divide a non-zero entry of $\vec c$, we define $\sig_{x}$ to be the average of the one sided limits
at these points. We assign no particular meaning to these values, but do this simply to avoid ambiguity.
These  averages of one sided limits will also be integers by  (\ref{snc}).

For our example 
$J \amalg 1^- \left<2^-\right> \amalg 2^+ $, the following list describes the signature and nullity functions. The arrows point to either a pair of integers or a single integer. The first integer is   $\sig_{b/p}$ for any point $b/p$ in the interval (or singleton). The second integer is the value $\eta_p$ if there is a number $b/p$ in the interval (or singleton). Thus the second integer assigned to any proper interval is $\nul.$ There is no second integer for  a singleton not of the form $b/p$.
\begin{lstlisting}
(0/1, 1/14) -->  (-1, 0), 1/14 -->  (-2)
(1/14, 1/10) -->  (-3, 0), 1/10 -->  (-5)
(1/10, 1/7) -->  (-7, 0), 1/7 -->  (-8, 1)
(1/7, 1/6) -->  (-9, 0), 1/6 -->  (-8)
(1/6, 1/5) -->  (-7, 0), 1/5 -->  (-7, 0)
(1/5, 3/14) -->  (-7, 0), 3/14 -->  (-8)
(3/14, 2/7) -->  (-9, 0), 2/7 -->  (-10, 1)
(2/7, 3/10) -->  (-11, 0), 3/10 -->  (-9)
(3/10, 1/3) -->  (-7, 0), 1/3 -->  (-8, 1)
(1/3, 5/14) -->  (-9, 0), 5/14 -->  (-8)
(5/14, 2/5) -->  (-7, 0), 2/5 -->  (-7, 0)
(2/5, 3/7) -->  (-7, 0), 3/7 -->  (-6, 1)
(3/7, 1/2) -->  (-5, 0)
\end{lstlisting}

\subsubsection{Formulas for $\Delta$ and the entries of $\vec c$ }

These formulas are useful for  hand and also  computer calculation of $\sig_{b/p}(C)$ and $\eta_p(C)$.

\begin{prop}\label{RMlk} If $C$ has odd type, $\Delta  = -4(  l+2 (\Pi^- -\Pi^+) + (\Lambda^- -\Lambda^+))$.
If $C$ has even type, $\Delta  = -4(  l+2 (\Pi^- -\Pi^+) )$.
\end{prop}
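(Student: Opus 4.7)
The strategy is to interpret $\Delta = 2\vec s A_\Ga^{-1}\vec s^{\,t}$ as twice a linking number in the rational homology 3-sphere $Q$, and then compute that linking number geometrically from the tangent-line description $L(C)=\{(x,l_x):x\in C\}$.

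First, I invoke the standard plumbing-calculus fact that if $\mathcal L$ is a link in $Q=\partial X$ presented by arrows with signed-incidence vector $\vec s$ on a plumbing graph $\Ga$, then $\vec s A_\Ga^{-1}\vec s^{\,t}$ equals the linking number in the oriented double cover $\tilde Q$ of the lift $\tilde{\mathcal L}$ with its canonical ``fiber-pushoff'', obtained by pushing each arrow slightly along the $S^1$-fiber of the corresponding disk bundle. The factor of $2$ in $\Delta$ records passing from $Q$ to $\tilde Q$. Combined with Theorem~\ref{pplumb}, this gives $\Delta/2 = lk_{\tilde Q}(\tilde L(C),\tilde L(C)')$ for this natural parallel copy.

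Second, I realize $\tilde L(C)'$ as the tangent link of a nearby parallel curve $C'\subset\rp$: push each oval slightly off itself in a chosen normal direction, and, in the odd-type case, push $J$ along the normal direction compatible with its orientation. The compatibility of arrow signs with the chosen normal displacement is what makes this geometric pushoff agree with the algebraic fiber pushoff. Since $\tilde Q=\partial \tilde D$ with $\tilde D$ the blowup of $\cp$ along $\rp$, the linking number $lk_{\tilde Q}(\tilde L(C),\tilde L(C)')$ is then a signed intersection of $\tilde L(C)'$ with a Seifert-type surface for $\tilde L(C)$, and this reduces to a signed count of intersections between components of $C$ and $C'$ in $\rp$, together with (in the odd case) a contribution from intersections with $J$.

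Third, I evaluate the signed count weighted by the arrow signs $\vec s_o = (-1)^{\pari(o)+1}\epsilon(o)$ (and by $+1$ at $u_2$ in the odd case):
\begin{itemize}
\item Each oval contributes via its own self-linking with its pushoff, totaling $-2l$ after the sign bookkeeping collapses (the factor $(-1)^{\pari(o)+1}\epsilon(o)$ is squared for the diagonal contribution);
\item each injective pair of ovals contributes $\mp 2$ with sign depending on whether the pair is positive or negative, since the product of the two arrow signs along with the orientation of the cobounding annulus yields $-2$ for positive pairs and $+2$ for negative pairs, totaling $-4(\Pi^- - \Pi^+)$;
\item in the odd case, each oval contributes $\mp 2$ to the linking with the $u_2$-arrow representing $J$, with sign determined by $\epsilon(o)$ and $\pari(o)$, totaling $-2(\Lambda^- - \Lambda^+)$.
\end{itemize}
Multiplying by the outer factor $2$ yields $\Delta$ in both cases. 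The auxiliary vertices $u_1$ and $u_3$ carry no arrows, so they contribute nothing to $\vec s A_\Ga^{-1}\vec s^{\,t}$ beyond what is already counted through $u_2$ and the ovals.

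\textbf{Main obstacle.} The hardest step is the signed combinatorial count in the third paragraph. The arrow-sign convention $(-1)^{\pari(o)+1}\epsilon(o)$ is designed precisely to make positive injective pairs contribute with the opposite sign to negative ones in the linking count; verifying this cleanly requires comparing, for each pair of ovals, the orientation that the annulus they cobound induces on its boundary with the combined algebraic signs coming from the parities and $\epsilon$-labels of the two ovals. The analogous bookkeeping for ovals interacting with $J$ via $u_2$ must also be carried out, and the vertices $u_1, u_3$ must be verified to contribute no spurious terms. Once these sign conventions are matched, the first step (the general plumbing-linking identity) is standard Kirby calculus, and the two statements for odd and even type fall out uniformly.
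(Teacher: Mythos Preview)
Your approach is essentially the paper's: interpret $\Delta$ as (minus twice) a self-linking number of $L(C)$ with a parallel pushoff $L(C')$ in $Q$ via the plumbing-linking formula, then compute that linking number geometrically from the tangent-line picture. The paper does exactly this, invoking Proposition~\ref{Plumblinking} for the first step and citing \cite[Prop.~7.1]{G2} for the second, where the identity $\Lk(L(C),L(C'))=2\bigl(l+2(\Pi^--\Pi^+)+(\Lambda^--\Lambda^+)\bigr)$ (odd type) is already established.

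One correction: your passage to the double cover $\tilde Q$ is unnecessary and the justification you give for the factor $2$ is not right. Proposition~\ref{Plumblinking} works directly in $Q$ with rational linking numbers, giving $\Lk_Q(L(C),L(C'))=-\vec s A_\Ga^{-1}\vec s^{\,t}=-\Delta/2$; the $2$ in the definition $\Delta=2\vec s A_\Ga^{-1}\vec s^{\,t}$ is a normalization (related to $\mathcal O=2$, cf.\ the definition of $\vec c$), not a covering degree. Once you drop the $\tilde Q$ detour, your Step~3 is a direct re-derivation of \cite[Prop.~7.1]{G2}; the sign bookkeeping you flag as the main obstacle is precisely what that reference handles, and your sketch of it (diagonal terms giving $2l$, injective pairs giving $\pm 4$, oval--$J$ interactions giving $\pm 2$) is on the right track but would need the careful orientation matching you acknowledge.
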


We remark that then the Rohlin-Mishachev restriction can be stated: If $C$ is a dividing real algebraic curve with its complex orientation, then \[-\Delta= \begin{cases} m^2-1 &\text{if $m$ is odd } \\
m^2 &\text{if $m$ is even. }
\end{cases}\]

\begin{prop} \label{codd} If $C$ has odd type,

 \begin{enumerate} 

\item $\vec c_{u_1}=-2-4(\Lambda^--\Lambda^+)$

\item $\vec c_{u_2}=2(\Lambda^- -\Lambda^+)$
\item 
$\vec c_{u_3}=1+2(\Lambda^- -\Lambda^+)$, 

\item For each region $R$, $\vec {c}_{R}=(-1)^{\pari(R)} (1 +2 (\Lambda^-_R -\Lambda^+_R))$

\item For each oval $o$, $\vec c_{o}= \vec s_o(-2 \ep(o) +4 +4 (\Pi^-_o -\Pi^+_o)),$
\end{enumerate}
\end{prop} 

Note using Theorem \ref{form}, a curve of odd type tends to have large $\eta_p$ if there is a region $R$ with $1 +2 (\Lambda^-_R -\Lambda^+_R) =0 \pmod{p}$ and  $R$ is a disk with many holes. Also  it not hard to see that $\nul(C)=0$ for a curve of odd type.

\begin{prop} \label{ceven} If $C$ has even type,

 \begin{enumerate} 

\item $\vec c_{u_1}=-4(\Lambda^--\Lambda^+)$

\item $\vec c_{u_2}= 2(\Lambda^- -\Lambda^+)$
\item 
$\vec c_{u_3}= 2(\Lambda^- -\Lambda^+)$, 

\item For each region $R$, $\vec {c}_{R}=(-1)^{\pari(R)} (2 (\Lambda^-_R -\Lambda^+_R))$

\item For each oval $o$, $\vec c_{0}= \vec s_o( 4 +4 (\Pi^-_o -\Pi^+_o)).$
\end{enumerate}

\end{prop} 

In the same way,  a curve of even type tends to have large $\eta_p$ if there is a region $R$ with $ \Lambda^-_R -\Lambda^+_R =0 \pmod{p}$ and  $R$ is a disk with many holes.

\subsubsection{The graph $\fZ_{\vec c, p}$ is generally rather small}

\begin{prop} \label{scarceodd} If $C$ has odd type, $\fZ_{\vec c, p}$ has no edges and each vertex of $\fZ_{\vec c, p}$ is either indexed by a region or is possibly $u_3$.
\end{prop}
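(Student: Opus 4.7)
The plan is to identify which vertices and edges can survive in $\fZ_{\vec c,p}$ by exploiting the neighbor condition in the definition. Since $p$ is an odd prime, the arrowhead vertices of $\Ga^+$ all carry $\vec c^+$-values $\pm 2 \not\equiv 0 \pmod p$, so any vertex of $\Ga$ that is joined in $\Ga^+$ to an arrowhead is automatically disqualified from $\fZ_{\vec c,p}$. For $C$ of odd type the arrows of $\Ga(C)$ are attached precisely to $u_2$ (carrying the arrow for the one-sided component $J$) and to every oval vertex, so the first step is simply to conclude that $u_2$ and all oval vertices are excluded.

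The second step is to rule out $u_1$ using Proposition \ref{codd}. Suppose $\vec c_{u_1} \equiv 0 \pmod p$. The formula $\vec c_{u_1} = -2 - 4(\Lambda^- - \Lambda^+)$, together with $p$ being odd, gives $1 + 2(\Lambda^- - \Lambda^+) \equiv 0 \pmod p$, whence $\vec c_{u_2} = 2(\Lambda^- - \Lambda^+) \equiv -1 \not\equiv 0 \pmod p$. Since $u_1$ is joined to $u_2$ by an edge of $\Ga \subset \Ga^+$, this forces $u_1 \notin \fZ_{\vec c,p}$. So only $u_3$ and the region vertices remain as candidates, which is the vertex claim of the proposition.

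The third step handles the edges. Every edge of $\Ga$ is one of $u_1$-$u_2$, $u_1$-$u_3$, $u_1$-$R_1$, or a region-oval edge, so each edge has at least one endpoint in $\{u_1\} \cup \{\text{ovals}\}$. Since we have already shown this set is disjoint from $\fZ_{\vec c,p}$, no edge of $\Ga$ has both endpoints in $\fZ_{\vec c,p}$, so $\fZ_{\vec c,p}$ is edge-free.

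I do not anticipate any real obstacle here; everything reduces to the explicit formulas in Proposition \ref{codd} and the combinatorial description of $\Ga$ and $\Ga^+$. The only mild subtlety is bookkeeping: one must carefully distinguish neighbors inherited from $\Ga$ from the new neighbors coming from the arrowhead vertices when applying the neighbor exclusion in $\Ga^+$, and use the oddness of $p$ twice (once to divide by $2$ in the $u_1$ calculation, once to ensure $\pm 2 \not\equiv 0 \pmod p$ at the arrowheads).
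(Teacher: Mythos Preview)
Your argument is correct and follows essentially the same outline as the paper's proof: first exclude the arrowhead vertices and hence all oval vertices and $u_2$, then exclude $u_1$, leaving only region vertices and $u_3$, from which the absence of edges is immediate. The only difference is in how $u_1$ is ruled out: the paper simply observes that $\vec c_{R_1}=1$ (from Proposition~\ref{codd}(4), since no ovals encircle the outer region), so $u_1$ is always adjacent to a vertex with $\vec c\not\equiv 0\pmod p$ and no case split is needed; your route via $u_2$ works just as well but requires the extra hypothesis $\vec c_{u_1}\equiv 0\pmod p$ to get started.
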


\begin{prop} \label{scarceeven} Suppose $C$ has even type.  

\begin{enumerate}
\item
If $\Lambda^- \ne \Lambda^+   \pmod{p}$, $\fZ_{\vec c, p}$ has no edges and each vertex of 
$\fZ_{\vec c, p}$ is  indexed by a non-outer region.  

\item 
Assume $\Lambda^-  = \Lambda^+   \pmod{p}$. Then $\fZ_{\vec c, p}$  contains  $u_1$, $u_2$, $u_3$ and 
the two edges joining  these vertices.  Let  $R_1$ denote the  outer region.  $\fZ_{\vec c, p}$ contains $R_1$ and the edge joining $R_1$ to $u_1$ if and only if: for every outer oval $o$,   $\Pi^+_o -\Pi^-_o =1 \pmod{p}.$        
$\fZ_{\vec c, p}$ contains no other edges. The only other vertices possibly contained in $\fZ_{\vec c, p}$  are indexed by non-outer regions.
 \end{enumerate}\end{prop}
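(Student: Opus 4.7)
The plan is to read off the residues of the entries of $\vec c$ modulo $p$ from Proposition \ref{ceven} and then examine the two defining conditions of $\fZ_{\vec c,p}$ against the (small, explicit) local structure of $\Ga$ and $\Ga^+$ at each vertex.

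First I would observe that since $p$ is an odd prime, the factors $\pm 2$ and $\pm 4$ appearing in Proposition \ref{ceven} are units modulo $p$, so $\vec c_{u_1},\vec c_{u_2},\vec c_{u_3}$ all vanish mod $p$ exactly when $\Lambda^-\equiv \Lambda^+\pmod{p}$, while for a region $R$, $\vec c_R\equiv 0\pmod{p}$ iff $\Lambda^-_R\equiv \Lambda^+_R\pmod{p}$, and for an oval $o$, $\vec c_o\equiv 0\pmod{p}$ iff $\Pi^+_o-\Pi^-_o\equiv 1\pmod{p}$. Moreover each arrowhead vertex carries $\vec c^+=\pm 2$, hence is never $\equiv 0\pmod{p}$. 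Since every oval $o$ is adjacent in $\Ga^+$ to its arrowhead, no oval can lie in $\fZ_{\vec c,p}$ in either case.

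For case (1), $\Lambda^-\not\equiv \Lambda^+\pmod{p}$ immediately rules out $u_1,u_2,u_3$, and the outer region $R_1$ is adjacent in $\Ga^+$ to $u_1$, hence $R_1\notin\fZ_{\vec c,p}$. So the only vertices that can lie in $\fZ_{\vec c,p}$ are non-outer regions. In $\Ga$ a region is adjacent only to ovals, and ovals never lie in $\fZ_{\vec c,p}$, so no edges of $\Ga$ survive between vertices of $\fZ_{\vec c,p}$, which is the first claim.

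For case (2), $\Lambda^-\equiv \Lambda^+\pmod{p}$ gives $\vec c_{u_1},\vec c_{u_2},\vec c_{u_3}\equiv 0\pmod{p}$, and because no oval encircles the outer region we have $\Lambda^{\pm}_{R_1}=0$, so $\vec c_{R_1}=0$ as well. In $\Ga^+$ the vertices $u_2$ and $u_3$ have $u_1$ as their only neighbor (since for even type no arrow is attached to $u_2$, and no arrow is ever attached to $u_3$), so $u_2,u_3\in\fZ_{\vec c,p}$; and $u_1$ has neighbors $u_2,u_3,R_1$, each with $\vec c^+\equiv 0$, so $u_1\in\fZ_{\vec c,p}$ and the two edges $u_1$--$u_2$, $u_1$--$u_3$ lie in $\fZ_{\vec c,p}$. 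The remaining neighbors of $R_1$ in $\Ga^+$ are the outer ovals, so $R_1\in\fZ_{\vec c,p}$ (together with the edge $R_1$--$u_1$) precisely when $\vec c_o\equiv 0\pmod{p}$ for every outer oval $o$, i.e.\ when $\Pi^+_o-\Pi^-_o\equiv 1\pmod{p}$ for every outer oval. All further vertices of $\fZ_{\vec c,p}$ are non-outer regions, and as in case (1) such vertices contribute no edges because in $\Ga$ a region is adjacent only to ovals. The only real bookkeeping step is checking the valence of $u_2$ and $u_3$ in $\Ga^+$; beyond that, the argument is a direct case analysis from Proposition \ref{ceven}, so I do not anticipate a substantive obstacle.
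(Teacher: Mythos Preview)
Your proposal is correct and follows essentially the same approach as the paper: both arguments read off the residues of the entries of $\vec c$ from Proposition \ref{ceven}, use that arrowhead entries $\pm 2$ are units modulo the odd prime $p$ to exclude ovals from $\fZ_{\vec c,p}$, and then do a direct case analysis on the local structure of $\Ga^+$ at $u_1,u_2,u_3,R_1$. Your write-up is simply more detailed than the paper's (which is quite terse); the only minor imprecision is the sentence ``in $\Ga$ a region is adjacent only to ovals,'' which as stated fails for $R_1$, but since you have already disposed of $R_1$ separately in each case this does not affect the argument.
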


\subsection{Some hand calculations}\label{hand}

Using  Theorem \ref{form}, and the above formulas, one can derive the following formulas.
We assume that  $\al + \beta \ne 0$.

\[\sig_{1/3}  ( J \amalg 1^- \left<  \alpha^-  \amalg  \beta^+ \right> ) =  
\begin{cases} 
\frac 8 3 (\be-\al)-2 &\text{if $\al=\be\pmod{3}$} \\
 \frac 8 3 (\be-\al+1)-4 &\text{if $\al=\be +1 \pmod{3}$}\\
 \frac 8 3 (\be-\al-1) &\text{if $\al=\be-1\pmod{3}$}
\end{cases}\]

\[\eta_{3} ( J \amalg 1^- \left<  \alpha^-  \amalg  \beta^+ \right>)= 
\al + \be -1 \]

\[\sig_{1/3} ( J \amalg 1^+ \left<  \alpha^-  \amalg  \beta^+ \right> ) =  
\begin{cases} 
\frac 8 3 (\al-\be) + \be -3 \al +1  &\text{if $\al=\be\pmod{3}$} \\

\frac 8 3 (\al-\be-1)+ \be -3 \al +3  &\text{if $\al=\be+1\pmod{3}$}\\

\frac 8 3 (\al-\be+1)+ \be -3 \al -2    &\text{if $\al=\be -1 \pmod{3}$}
\end{cases}\]

\[\eta_{3} ( J \amalg 1^+ \left<  \alpha^-  \amalg  \beta^+ \right> ) = 
\begin{cases} 0 &\text{if $\al \ne \be-1\pmod{3}$}\\
1 &\text{if $\al=\be-1\pmod{3}$}
                         \end{cases}\]

\[\sig_{1/3}  ( J \amalg 1^+ \left<1^+ \left<  \alpha^-  \amalg \beta^+ \right>  \right>) =  
\begin{cases} 
\frac 8 3 (\al-\be)-2 &\text{if $\al=\be\pmod{3}$ } \\
 \frac 8 3 (\al-\be-1) +1 &\text{if $\al=\be +1 \pmod{3}$}\\
 \frac 8 3 (\al-\be+1)-5 &\text{if $\al=\be-1\pmod{3}$}
\end{cases}\]

\[\eta_{3} (  J \amalg 1^+ \left<1^+ \left<  \alpha^-  \amalg \beta^+ \right>  \right>)=
\begin{cases}  
\al + \be  &\text{$\al = \beta  \pmod{3}$} \\
\al + \be -1  &\text{$\al \ne \beta \pmod{3}$}. 
\end{cases}\]

\subsection{Prohibiting two infinite sequences of complex schemes }\label{inf}

\begin{thm} The complex scheme $J \amalg 1^-\left<  12^-  \amalg 15^+  \right> $ is the first of the series of complex M-schemes of degree $2k+1$ where $k=1 \pmod{3}$ (and $k\ge 4$) given by 
$J \amalg 1^- \left<  \alpha^-  \amalg  \beta^+ \right>  $ where $\alpha=(5 k^2-k-4)/6$ and  $\beta=(7 k^2-5k-2)/6$. These complex schemes 
satisfy the Rohlin-Mishachev formula for  complex orientations but  cannot be realized as real algebraic curves.
\end{thm}

They are all prohibited by Theorem  \ref{bound} using the  formulas for $\sig_{1/3}$, and $\eta_{3}$ in subsection \ref{hand}.
We only need the case $\al=\be\pmod{3}$. 
These are also prohibited by earlier unpublished work of Kharlamov.  
The scheme
$J \amalg 1^-\left<  12^-  \amalg 15^+  \right> $ is also prohibited for a curve of degree 9 recently by 
S. Fiedler-Le Touz\'e \cite{F}.

Here is another infinite sequence that we can prohibit in the same way.

   \begin{thm} The degree $2k+1$ complex  M-schemes, $J \amalg 1^+<1^+< \alpha^- \amalg \beta^+>>$ 
for $\alpha =  \frac{1}{6} \left(7 k^2-5k-6\right)$, and $\beta= \frac{1}{6} (5 k^2-k-6)$ where
$k \ge 5$ and $k \ne 1 \pmod{3}$  satisfy the Rohlin-Mishachev formula for  complex orientations but  cannot be realized as real algebraic curves.
\end{thm}
.

\subsection{Some conventions and notations}
 All manifolds are smooth and oriented \footnote{except  for $\rp$ and unoriented spanning surfaces mentioned  in Appendix \ref{even1/4}}. By a plumbing, we mean a 4-manifold obtained by plumbing 2-disk bundles over 2-spheres according  to a tree whose vertices are 
weighted by integers.  By a  graph manifold, we mean the boundary of a plumbing. By a graph link, we mean a link given by specifying a certain number of oriented circle fibers over each 2-sphere. Because we insist that the plumbing graph be a tree (or  disjoint union of trees), our notion of graph manifold and graph link is more restrictive than some usage.
The letter $p$ will always denote a prime number. We use  $\CQ$ to denote a $\BZ_p$-homology sphere. We use $Q$ denote the projective tangent bundle of $\rp$ which is a $\BZ_p$-homology sphere for every odd prime $p$, as $H_1(Q)= \BZ_2 \oplus   \BZ_2$. Sometimes we repeat these conventions or hypotheses for emphasis.

\subsection{Organization}

This introduction began with a summary of our new results concerning real algebraic  curves. 
The proofs are mainly delayed to later sections. 

In \S\ref{plumb.sec1}, we prove  Theorem \ref{pplumb}. 
The  definitions and properties of the signature and nullities of  links in  a $\BZ_p$-homology sphere are reviewed in section \S\ref{section.sig}. These invariants for $L(C) \subset Q$ can interpreted as invariants of a complex scheme $C$. See Definition \ref{defsig}. Section 3 includes  a proof of  Theorem \ref{bound}. Section 3 concludes with Theorem \ref{step2}, which implies Proposition \ref{signulprop} and also
illustrates the benefits of the reparameterization of the signatures given in Definition \ref{reparam}. 
Casson-Gordon invariants are signature invariants of finite cyclic covers of 3-manifolds that are closely related to the signatures of links.
In \S\ref{section.graph}, we  derive formulas for Casson-Gordon invariants and nullity invariants for  graph manifolds. 
In \S\ref{section.glform}, we use the formulas of \S3 to derive  formulas for signatures and nullities of graph links in a 
$\BZ_p$-homology sphere.
In \S\ref{prove}, we prove Theorem \ref{form}, and Propositions \ref{RMlk}, \ref{codd},   \ref{ceven}, \ref{scarceodd}, and  \ref{scarceeven}. 
In \S\ref{section.step}, {we prove Theorem \ref{step}.
We have  three appendices. Appendix \ref{applk}
discusses some formulas for {linking numbers} that we use.
Appendix \ref{sd} has a {Mathematica program that computes $\sig_{b/p}(C)$ and $\eta_{p}(C)$. 
In Appendix \ref{even1/4}, we discuss the {behavior  of $\sig$ and $\nul$ at $1/4$ of complex schemes of even type.}

\section{Plumbing; Proof of Theorem \ref{pplumb}}\label{plumb.sec1}

 We first strengthen the statement of the theorem. This allows 
 a proof by induction on the number of ovals.    In the inductive step, we will delete an empty oval ( i.e. an oval with no ovals in the disk that it bounds). 
 
To describe the strengthening, we must first recall the definition of $L(C).$  To each point $x$ of $C$ we have the tangent line $l_x$ to $C$ at $p$. The pair $(x,l_x)$ is a point in the projective tangent bundle of $\rp$. 
One defines $L(C)= \{  (x,l_x)| x \in C\}$. It is a link in $Q$ with one component lying over each component of $C$ in $\rp$.  We orient $L(C)$ so that the projection to $C$ is orientation preserving on each component. 

We next fix an orientation on $\rp \setminus J$. In each region $R_i$ of $\rp \setminus (C \cup J )$, pick a point $y_i$ , and let $f_i$ denote the fiber in $Q$ over $y_i$, and  assign $f_i$ the orientation given by a counterclockwise rotation of lines (with respect to the orientation chosen for $\rp \setminus J$ )  in $\rp$ through $y_i$. Let $\hat L(C)$ denote the link $L(C) \cup_i f_i$.

 Let $\hat \Gamma(C)$ denote the decorated plumbing graph $\Gamma(C)$ 
with  $\beta_0(\rp \setminus C)$ additional signed arrows: one arrow is attached to each vertex that is associated to a region of $\rp \setminus J$. The sign on each arrow  indexed by $R$ is
given by  $(-1)^{\pari(R)}$. In Figure \ref{example3}, we illustrate $\hat \Gamma(C)$ for $C$ of Figure \ref{example}. We let $\CL(\hat \Ga(C))$ in  $Q$ denote the link obtained by adjoining  to  $\CL(\Ga(C))$ the $\beta_0(\rp \setminus C)$ fibers specified by the added signed arrows. 

 \begin{figure}[h]
\includegraphics[width=2in]{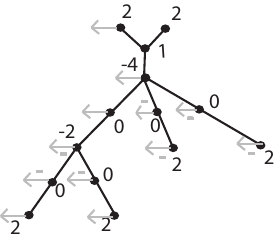}
\caption{The associated decorated graph $\hat \Ga(C)$ for the example complex scheme given on the left of Figure \ref{example}.
} \label{example3}
\end{figure}

Here is the strengthened version of Theorem \ref{pplumb}

 \begin{thm}\label{pplumb+} $\hat L(C)$ and $\CL(\hat \Ga( C))$ both describe  the same  link in $Q$. Moreover, the signature of the  plumbing matrix  $\Ga(C)$ is two. Two fibers to any spherical base in the plumbing, which is indexed by a region of  $\rp \setminus C$, will have linking number zero with each other in $Q$. 
 Two fibers to any spherical base in the plumbing,  indexed by an oval of  $C$, will have linking number two with each other in $Q$. 
\end{thm}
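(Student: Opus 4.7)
The plan is to induct on the number of ovals $l(C)$.

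For the base case $l(C) = 0$ (the scheme is just $J$, odd type), the graph $\Ga(J)$ has four vertices $R_1, u_1, u_2, u_3$ weighted $2, 1, 2, 2$ and three edges emanating from $u_1$. I would begin by computing directly the plumbing matrix
\[
A = \begin{pmatrix} 2 & 1 & 0 & 0 \\ 1 & 1 & 1 & 1 \\ 0 & 1 & 2 & 0 \\ 0 & 1 & 0 & 2 \end{pmatrix},
\]
whose determinant is $-4$ (matching $|H_1(Q)| = 4$ for the projective tangent bundle $Q$ of $\rp$) and whose characteristic polynomial factors as $(\lambda - 2)^2(\lambda^2 - 3\lambda - 1)$, yielding eigenvalues $2, 2, (3 + \sqrt{13})/2, (3 - \sqrt{13})/2$ and therefore signature $2$. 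Then I would check $\partial X_{\Ga(J)} \cong Q$ via a direct Kirby-calculus reduction to a standard surgery description of $PT(\rp)$. The link $\hat L(J)$ consists of the tangent lift of $J$ together with a single fiber over $R_1 = \rp \setminus J$, matching the two positive arrows of $\hat \Ga(J)$ on $u_2$ and $R_1$. The fiber linking claim for $R_1$ reduces to $(A^{-1})_{R_1, R_1} = 0$ (the $(1,1)$ cofactor vanishes by direct computation), while the oval-vertex claim is vacuous.

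For the inductive step, I would select an innermost (empty) oval $o \subset C$ lying in a region $R$ of $C' \cup J$, where $C' = C \setminus o$. The graph $\Ga(C)$ is obtained from $\Ga(C')$ by replacing the vertex $R$ (weight $w := 2\chi(R)$) by a chain $R' - o - D_o$ with weights $w - 2, 0, 2$; the vertex $R'$ inherits all other edges formerly incident to $R$, and new signed arrows on $o$ and (in $\hat \Ga$) on $D_o$ carry signs $(-1)^{\pari(o)+1}\ep(o)$ and $(-1)^{\pari(D_o)}$ respectively. Signature invariance is then an algebraic observation: the $3 \times 3$ block
\[
T = \begin{pmatrix} w-2 & 1 & 0 \\ 1 & 0 & 1 \\ 0 & 1 & 2 \end{pmatrix}
\]
replacing the $1 \times 1$ block $(w)$ is congruent over $\mathbb{Z}$ to $(w) \oplus N$ with $N = \bigl(\begin{smallmatrix} 0 & 1 \\ 1 & 2 \end{smallmatrix}\bigr)$, via the operation $\operatorname{col}_1 \mapsto \operatorname{col}_1 + 2\operatorname{col}_2 - \operatorname{col}_3$ together with its symmetric row version; the same operation extends over the full matrix $A_{\Ga(C)}$ without affecting other block interactions. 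Since $N$ has signature $0$, this forces $\sig A_{\Ga(C)} = \sig A_{\Ga(C')} = 2$.

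For the boundary invariance, I would verify that the chain $\bullet_{w-2} - \bullet_0 - \bullet_2$ reduces back to $\bullet_w$ via a sequence of Kirby moves preserving $\partial X$, realizable as a handle slide of the weight-$2$ handle over the weight-$0$ handle followed by a canceling handle pair elimination. For the link identification $\hat L(C) = \CL(\hat \Ga(C))$, I would use the fact that since $o$ bounds the disk $\bar D_o \subset \rp$, the projective tangent bundle over $\bar D_o$ is trivial, namely $\bar D_o \times \mathbb{RP}^1$. The tangent lift $L(o) \subset o \times \mathbb{RP}^1$ is a $(1, 2)$-torus knot on the boundary torus (a $2\pi$ rotation of the tangent line along $o$ becomes two half-rotations in the projective fiber), which matches the arrow at $o$ combined with the edge to $D_o$ in the plumbing picture. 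The signs on these arrows encode the orientation of $L(o)$ induced by the chosen orientation of $\rp \setminus J$, twisted by $\ep(o)$ and the parity of $o$. The fiber linking claims then follow: two fibers over the $o$-sphere are linked via the Euler number $2$ of the neighbor $D_o$-bundle, giving linking number $\pm 2$ in $Q$; two fibers over any region-vertex's sphere have linking $0$ by the corresponding $(A^{-1})_{R, R}$ calculation.

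The main obstacle will be the careful bookkeeping of orientations and signs in the inductive step. Specifically, one must verify that the sign $(-1)^{\pari(o)+1}\ep(o)$ on the arrow at $o$ correctly matches the orientation of $L(o)$ with respect to the fixed orientation of $\rp \setminus J$; this requires tracking how crossings of ovals flip the induced orientation. The parity relations $\pari(R') = \pari(R)$ and $\pari(D_o) = \pari(R) + 1 \pmod{2}$ must be propagated consistently through the induction to ensure the rules produce a valid decorated graph and identify the correct oriented link in $Q$ at each step.
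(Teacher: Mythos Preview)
Your overall architecture---induction on the number of ovals, with the base case handled by an explicit Kirby-calculus identification and the inductive step by a local handle-slide reducing the chain $R'\!-\!o\!-\!D_o$ back to a single vertex $R$---is exactly the paper's strategy. Your signature argument (the congruence $A_{\Ga(C)}\sim A_{\Ga(C')}\oplus N$ with $\Sign N=0$) is a clean algebraic rendering of what the paper does via the same handle slide, and your identification of $L(o)$ as a $(1,2)$-curve on the torus over $o$ matches the paper's $(2,1)$-cable description.

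The genuine gap is in your treatment of the fiber self-linking claims. For the oval vertex you write that two fibers over the $o$-sphere ``are linked via the Euler number $2$ of the neighbor $D_o$-bundle, giving linking number $\pm 2$''. This is not correct reasoning: the self-linking of two fibers over $o$ equals $-(A_{\Ga(C)}^{-1})_{o,o}$, which is not simply the Euler number of an adjacent bundle, and the statement requires the value to be exactly $2$, not $\pm 2$. For the region vertices you defer to ``the corresponding $(A^{-1})_{R,R}$ calculation'', but you never indicate how to carry this out; your congruence $A_{\Ga(C)}\sim A_{\Ga(C')}\oplus N$ does not control individual diagonal entries of the inverse, so the inductive hypothesis $(A_{\Ga(C')}^{-1})_{R,R}=0$ does not by itself yield $(A_{\Ga(C)}^{-1})_{R',R'}=0$ or $(A_{\Ga(C)}^{-1})_{D_o,D_o}=0$.

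The paper closes both gaps \emph{topologically}, by tracking meridians through the very Kirby move you describe. Under the handle slide and cancellation, a meridian of the $(w)$-framed component in the ``before'' picture is carried to a meridian of the new $2$-framed component (the $D_o$-vertex); since by induction two such meridians had linking $0$ before, they have linking $0$ after. Moreover, sliding a meridian of the $0$-framed ($o$-vertex) component over the $2$-framed handle exhibits it as a $(2,1)$-cable of a meridian of the $2$-framed component; doing this twice produces a $(4,2)$-cable, which is the tangent lift of a negative injective pair of ovals and hence has linking number $2$ by the computations in \cite{G2}. You have all the ingredients for this argument---you just need to replace the heuristic ``Euler number of the neighbor'' and the unperformed ``$(A^{-1})_{R,R}$ calculation'' by this meridian-tracking through your own Kirby move.
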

 
 \begin{proof}
 
 In \cite[pages 59-60]{G2}, the first author described how to draw a picture of $L(C)$ inside a description of
 $Q$ as the result of surgery on a $2$-component framed link in $S^3$. $L(J)$ is a one-component link which we denote
 by $\ell.$ Moreover $L(C \setminus J)$ is obtained by a iterated satellite construction starting with a single component  we denote by $f$ (for fiber of the  projective tangent  circle bundle). This is illustrated in Figure 
 \ref{slide} where we describe how to slide $\ell$ over one of the 2-handles in the boundary of 4-dimensional handlebody specified by a framed link.  Figure \ref{quat} shows that the result which can be converted to a plumbing diagram after  two positive blow-ups.
 
  \begin{figure}[h] 
\includegraphics[width=2in]{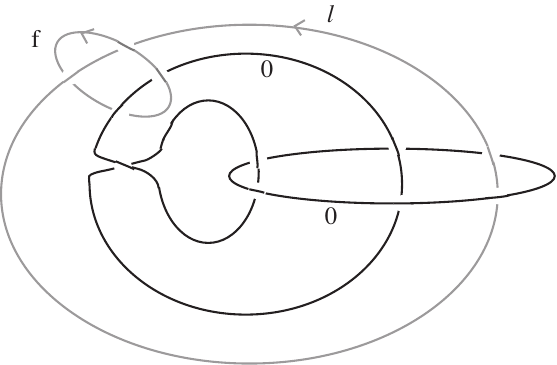} \quad \includegraphics[width=2in]{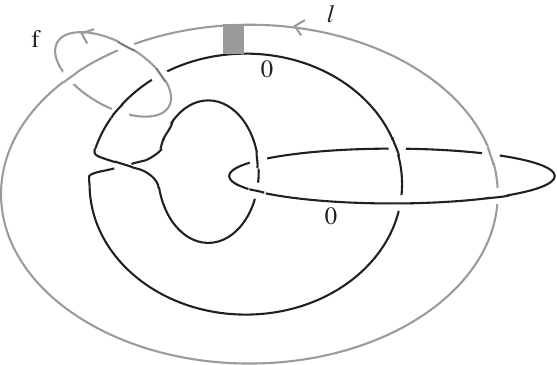}
\caption{On the left is a 2-component framed link description for $Q$ (these components are zero  framed and so  have $0$ written near them. Also shown is  2-component link in $Q$  consisting of $f$ and $\ell$.
On the right the grey box shows the path of the isotopy of $\ell$ in $Q$ to be performed before sliding over one of the handles.
} \label{slide}
\end{figure}
  \begin{figure}[h] 
\includegraphics[width=1.4in]{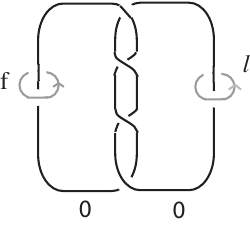} \ \ \includegraphics[width=2in]{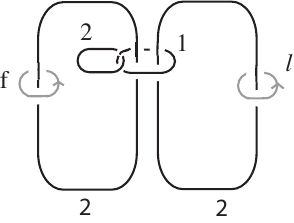}   \includegraphics[width=1in]{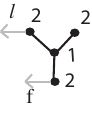}
\caption{On the right is the result of the  handle slide proposed in Figure \ref{slide}, after a further isotopy of the framed link and the two components $\ell$ and $f$. If one blows down in sequence two unknots with framing $1$ in the middle diagram, one recovers the diagram on the left.
The graph link diagram on the right denotes the same link in $Q$.} \label{quat}\end{figure}

It is enough to prove the theorem in the case $C$ is a complex scheme of odd type.
Suppose $C$ has no ovals, then $\hat L(C)$ and $\CL(\hat \Ga( C))$ are both described simply by the plumbing diagram on the left of Figure \ref{quat}. We note that the signature of the plumbing graph is two as when we blow down two plus one framed unknots we get a framed link with linking matrix $\begin{bmatrix} 0 &2\\ 2&0 \end{bmatrix}$.  The fibers over two distinct points  in any one of  2-spheres  that are weighted two have linking number zero. This follows from Proposition \ref{Plumblinking} applied to  the the surgery description on the left of figure \ref{slide}.  
Thus Theorem \ref{pplumb+} holds, if $C$ contains no ovals. 

Now we are set to prove the theorem by induction on the number of ovals. We consider a complex scheme $C$ with some ovals, and suppose the theorem holds for $C'=C$ with an empty  oval deleted.

The link   $\hat L(C)$ is obtained from $\hat L(C')$ by adding two components as follows: take the component of $\hat L(C')$ which is the fiber, say $f'$, over some point  in the region where the  new oval $C \setminus C'$ is to be born and push off an parallel copy of this fiber, say $f$ such that the linking number of $f$  and $f'$ is zero and then adjoin a $(2,1)$ cable of $f'$.  
We want to see that the corresponding change from $\CL(\hat \Ga( C))$  to $\CL(\hat \Ga( C'))$ amounts to the same thing. For this, we draw framed link pictures. First,  we simply consider how  surgery descriptions that correspond to the plumbing diagrams change  in going from $\CL(\hat \Ga( C'))$ to $\CL(\hat \Ga( C))$. 

  \begin{figure}[h]
\includegraphics[width=3.5in]{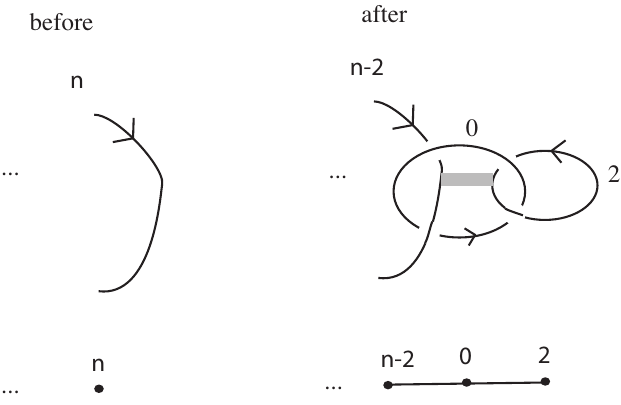} 
\caption{Framed link descriptions and corresponding plumbing diagrams. The grey rectangle indicates a handle slide.
} \label{change}
\end{figure}

Consider Figure \ref{change}.
If we start with the after picture, we can slide the handle with framing $n-2$ over the handle with framing $2$
using the track suggested by the gray rectangle. The curve previously with framing $n-2$ now has framing $n$.
We also have two other components  but one of these with framing  zero is a meridian to the other component.
One may use this zero framed component to unlink everything else from this two component Hopf link which
can then be erased completely as it is a surgery description of $S^3$. Notice that this shows that the signature of the plumbing matrix does not change, as handle sliding corresponds to basis change and the signature of 
$\begin{bmatrix} ? &1\\ 1&0 \end{bmatrix}$ is zero.  Also a positive  meridian  to the  curve framed $n$ in the before picture is isotopic to a negative meridian to the curve framed $2$ in the after picture. 
The assignment of parities to regions that is used in the algorithm to construct $\Gamma(C)$ compensates for this alternation. 
In addition, a pair of  meridians  to the  curve framed 
$n$ in the before  picture, which by induction must have linking number zero, is isotopic to a  pair of meridians to the curve framed $2$ in the after picture. So a pair of meridians to the curve framed $2$ in the after picture also have linking number zero. Because of this a pair of meridians in the plumbing picture correspond to a pair of fibers in the projective tangent circle bundle over the region inside the  new oval.

Finally one  may see that  a  negative meridian to the zero framed component may be slid over the handle corresponding to the 2-framed component so that it is seen to be isotopic to a (2,1) cable around a positive meridian for the 2-framed component. We illustrate this in Figure \ref{ill},  in the case that the oval we add in going from $C'$ to $C$ has odd parity. This is exactly what is needed to see  $\hat L(C)= \CL(\hat \Ga( C))$ using the description of $L(C)$ in \cite{G2}. If we slide two negative meridians to the zero framed component  over the handle corresponding to the 2-framed component in sequence, the result can be seen to be  a (4,2) cable around a positive meridian for the 2-framed component.  This  is the link with two components lying above two nested ovals oriented in the same direction (and so making up a negative injective pair) as in \cite{G2} and so this link has linking number $2$.

The case that the added oval has even parity is similar. 
\end{proof}

\begin{figure}[h]
\includegraphics[width=1.3in]{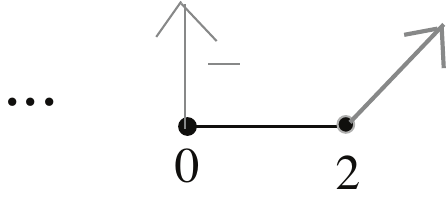} \   \ \includegraphics[width=1.3in]{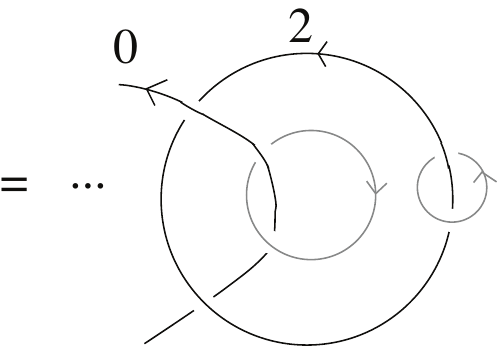} \   \ \includegraphics[width=1.3in]{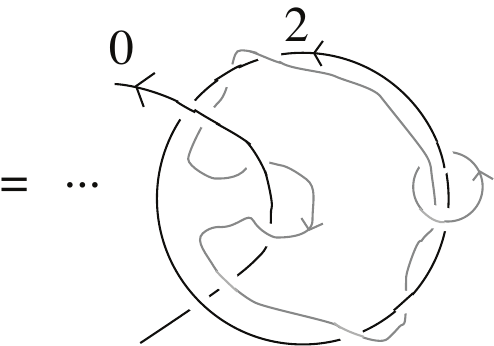}
\caption{} \label{ill}
\end{figure}

\section{Signature and nullity}\label{section.sig}

In this section, we define  signatures and nullities for links in a $\BZ_p$-homology sphere $\CQ$ that are associated to  $\BZ_p$-regular cover of the link complement. We only consider here covers which act in the same way on the cover of each meridian of the link component. For the applications given in \S\ref{sec.intro}, we will study these invariants for  graph links in a graph manifold descriptions of $Q$. Here we may use all odd primes $p$ as, of course $Q$ is a $\BZ_p$-homology sphere for any odd $p$. In \S \ref{section.glform}, we develop formulas for these signatures and nullities of graph links in a general graph manifold which is  a $\BZ_p$-homology sphere. We do this as we think these more general results are interesting in themselves.

There is a general discussion of signatures and nullities for links in rational homology spheres in \cite{G2, G3}.   In \cite{G2,G3}, the author considers more general situations than we need to consider here.  So we sketch the definitions as we use  slight variations of the definitions used previously. The reader should look to the above references for arguments which are omitted.

\subsection{Conventions for $\BZ_p$-covers}\label{cover.sub}

If we have a space $X$ equipped with a cohomology class $\phi \in H^1(X, \BZ_p)$, then $\phi$ describes a regular covering space 
which we will denote $X_\phi$ with $\BZ_p$ as group of covering transformations. 
There is a generator $T_\phi$ for the group of covering transformations such that  ${T_\phi}^{\phi[\ga]}$  sends the initial point of the lift of a loop 
$\ga$ to the terminal point of the lift of $\ga$.  

Similarly if $X$ is the complement of a codimension-2 submanifold $Y$ of a manifold 
$Z$ and $\phi$ assigns to  all the meridians of  $Y$ a non-zero value, we may complete
$X_\phi$ uniquely to a branched $\BZ_p$- cyclic cover of $Z$ branched along $Y$, which we will also denote by $Z_\phi$. Note in this case $\phi$ is a cohomology class of $X$ but not of $Z$.
If $S \subset X$ or $S \subset Z$ , we let $S_\phi$ denote the inverse image in $X_\phi$ or $Z_\phi$.  
Then  the group $\BZ_p$, generated by  the covering transformation $T_\phi$,  acts on $S_\phi$  with orbit space $S$.

If $0<a< p-1$, then ${T_{a \phi}}= {T_\phi}^a$. Thus  the $\zeta_p$ eigenspace for $T_{a \phi}$ is  also the ${\zeta_p}^{\hat a}$ eigenspace for $T_{ \phi},$ where $a \hat a = 1 \pmod{p}.$
Thus \cite[Lemma 7.2]{TW} there are isomorphisms  induced by Galois automorphisms of $\BQ(\zeta_p)$ between  $\CE_i(S_\phi)$  and $\CE_i(S_{a \phi})$. 
Moreover, if we change $\phi$ to $-\phi$,  this induced automorphism is complex conjugation. 
Let $\CE_j(S_\phi)$ denote the $\zeta_p= e^{2 \pi i/p}$-eigenspace for the action of $T$ on $H_j(S_\phi, \BC)$. 
When $S_\phi$ is a  $\BZ_p$ cyclic cover of $S$, $\CE_j(S_\phi)$ is the homology of $S$ twisted by $\phi$.

If $W_\phi$ is the  branched   $\BZ_p$ cyclic cover  of an  4-manifold  (possibly with boundary) $W$ branched along a proper surface $Y$ (possibly empty), then there is a Hermitian intersection  form on $\CE_2(W_\phi)$.
Let $\SignE(W_\phi)$ denote the signature of the Hermitian intersection form on $\CE_2(W)$.
{\it We usually omit the subscript $\phi$, and write $T$ for $T_\phi$, $\CE_2(W)$ for $\CE_2(W_\phi)$, and  $\SignE(W)$ for  $\SignE(W_\phi)$., etc.} We have that $\SignE(W_{-\phi})=\SignE(W_\phi).$

\subsection{Definitions  of signature and nullity }\label{def.sub}

\begin{deff} 
We let  $\phi_{L,p}:H_1(\CQ \setminus L) \rightarrow \BZ_p$ denote  unique homomorphism which takes the value of $1$ on the positive meridians of $L$.
\begin{equation}
\eta_p(L) = {\dim}\left( \CE_1 (\CQ_{\phi_{L,p}}) \right). \notag \end{equation}
\end{deff} 
Using the above remarks about Galois automorphisms,  ${\dim}\left(\CE(\CQ_{a \phi_{L,p}})\right)$  for $a$ prime to $p$ is independent of $a$.

\begin{deff} \label{sigdef} Suppose $W$ is a 4-manifold with $\partial  W=\CQ$  and $G$ is a proper  surface in $W$ with boundary $L\subset \CQ$ with no closed components, and there is an extension of $\phi_{L}$ to $H_1(W \setminus G)$ which we denote simply by $\phi$.  Let $G'$ be an arbitrary transverse push-off  of $G$ and let $G \circ G'$ denote a signed count of the double points.  Let $L'= \partial G'$, the push-off of $L$ in $Q$ specified by $G'$. Let   $\Lk(L, L')$  denote the sum of the linking numbers of components of $L$ with the components of $L'$. For $0<a<p$, we define
\begin{equation}
\sigma_{a/p}(L)=  \SignE(W_{a \phi}) -{\Sign(W)} + \frac 2 {p^2} a (p-a)\left( G \circ G'-\Lk(\partial G, \partial G')\right) 
 \end{equation}
\end{deff} 

But such a $(W,G)$ as defined above may not exist.  But, if not,  there is anyway  such a $(W,G)$ whose boundary is $p$ copies of $(\CQ,L)$. Then one modifies the definition of $\sigma_{a/p}(L)$  by taking $1/p$  times  the output of the above formula.
Proofs that these invariants are well-defined can be found in \cite{G2,G3}.
In the case of links in $Q$, this extra complication of taking $p$ copies of $(Q,L)$ is unnecessary. For a direct argument, see the beginning of the proof of Theorem \ref{step2}.
We have that $\sigma_{a/p}(L)=\sigma_{(p-a)/p}(L)$.
If $a$ is not in the range $0<a<p$ and $a$ is not divisible by $p$, we  define 
$\sigma_{a/p}(L)= \sigma_{c/p}(L)$ where $0<a<p$ and $c =a \pmod{p}$.

If $L$ is null-homologous in $\CQ$, then one may calculate these signatures and nullities in the usual way from a Seifert surface and Seifert pairing \cite{O}. To see this push interior of the Seifert surface into the interior of a collar $\CQ \times I$ on the boundary of $Z$. The argument in \cite[Chapter 12]{Ka} given for links in $S^3$ applies to calculate   $\SignE(\CQ \times I)$. 
In this way, one obtains a branched cover of  $\CQ \times I$ which above $\CQ \times {0}$ is the branched cover $\CQ$ along $L$, and above 
 $\CQ \times {1}$ is the disjoint union of $p$ copies of $\CQ$ being permuted cyclically. One then completes this  with copies of  a 4-manifold with boundary $\CQ$ also being permuted cyclically.

Another parameterization of the signatures 
for links in $Q$ 
is  very  useful.

\begin{deff}\label{reparam} For $p$ odd, and a $L$ link in $Q$, define
 \begin{equation} \sig_{b/p}(L) = \sigma_{\frac{2 b}p}(L).  \notag 
\end{equation}
\end{deff}

As $\sigma_{\frac a p}(L)=\sigma_{\frac{p-a}p}(L)$, we have that 
$\sig_{\frac a p}(L)=\sig_{\frac{p-a}p}(L)$. Thus
 we may as well only consider $\sig_{b/p}(L)$ for $0 \le b \le \frac{p-1}2.$
We define invariants  $\sig_{b/p}(C)$ and $\eta_p(C)$ of the introduction, by these same functions applied to $L(C)$ in $Q$.  

\begin{deff}\label{defsig} For $p$ odd, a complex scheme $C$, and $0 \le b \le \frac{p-1}2$, let
 \begin{equation}\sig_{b/p}(C) = sig_{b/p}(L(C)) = \sigma_{\frac{2 b}p}(L(C)) \notag  
\end{equation}
 \begin{equation} \eta_{p}(C) = \eta_{p}(L(C)).  \notag   
 \end{equation}
\end{deff}

\subsection{Proof of Theorem \ref{bound} }\label{section.bound}

As observed in the introduction, if $C$ is the real part of a dividing real algebraic curve of degree $m$ with its complex orientation, 
then $L(C)$ (with its complex orientation) is the boundary of a connected orientable surface  $G$ properly embedded in $D$ with $\beta_1(G)= (m-1)(m-2)/2.$ 
(see the subsection \ref{DG} for the construction of  $D$ and $G$). Using, for instance, a Mayer-Vietoris sequence for $D$ as the union of $D\setminus G$ and a tubular neighborhood of $G$,
one has that
\begin{equation}\label{mvcomp}
\beta_i(D \setminus G,\BZ_p) =
\begin{cases} 1 &\text{if $i=1$, with the homology generated by a meridian } \\
0 &\text{if $i=3$. }
\end{cases}.
\end{equation}

Let $a$ be a nonzero element of $\BZ_p$.  The $\BZ_p$-branched cover of $Q$ along $L(C)$ given by $a\phi_{L}$ (see section \ref{def.sub})  extends to a unique $\BZ_p$-branched cover of $D$ along $G$. Estimates due to Turaev-Viro \cite{TV} (see also \cite{V2},  \cite[Lemma 7.2]{G3} \cite[Prop. 1.4,1.5]{G1}) allow us to see that 

\begin{equation*}
\dim \CE_i(D \setminus G) =
 0  \quad \text{if $i=1$ or $3$. } \end{equation*}

Since the Euler characteristic of the graded vector space $\{\CE_i(D \setminus G)\}_i$ equals  the Euler characteristic of  $D \setminus G$ or $\chi(D) - \chi(G)$ (by \cite[Prop 1,1]{G1} for instance), it follows that  $\dim (\CE_2(D \setminus G))=(m-1)(m-2)/2$.
Using a Mayer-Vietoris sequence to calculate the (invisible) effect of gluing back the branch set 
 (the cover of each meridian is connected and $\CE_i(S^1)=0$ for i$ \ne 0$)
\begin{equation*}
\dim \CE_i(D) =
\begin{cases} 0 &\text{if $i=1$ } \\
 (m-1)(m-2)/2 &\text{if $i=2$ }\\
0 &\text{if $i=3$. }
\end{cases}.
\end{equation*}

Let $\CE\CI$ denote a matrix for the intersection pairing on $\CE_2(D)$.
We have the long exact sequence of the pair: 
\[ 
\begin{CD}
\CE_2(D) @>\CE\CI>> \CE_2(D,Q)@>>> 
\CE_1(Q) @>>>\CE_1(D)=0.\\
\end{CD}\]
It follows that
$|\Sign(\CE\CI)|+\nullity(\CE\CI) \le (m-1)(m-2)/2$,   and that $\eta_p(L(C))= \nullity(\CE\CI)$. Note that $\Sign (D)=0$, and if we choose a push-off
 $G'$ of $G$ which misses $G$ entirely, then $\Lk (\partial G, \partial G')=0$. Thus, by definition,
  $\sigma_{a/p}(L(C))=  \Sign(\CE\CI)$.
  
  Thus we have $ |\sigma_{a/p}(L(C))| + \eta_p(L(C)) \le (m-1)(m-2)/2.$
Letting $a=2b$, we obtain the sought for formula.  \qed

\begin{thm}\label{step2}
If $L$ is an  link  in $Q$, then
$\sig_{b/p}(L)$ as a function on the set of numbers  $b/p$,  where $p$ is odd, and  $1 \le b \le \frac {p-1} 2$,  can be extended to  a step function with only finitely many discontinuities on the interval  $(0,1/2)$.
Except for finitely many odd $p$,  $\eta_{p}(L)$ is a constant  function of $p.$ 
One also has that 
\begin{equation} \sig_{b/p}(L) +\eta_{p}(L) = \beta_0(L)-1 \pmod{2} \label{snc2} 
\end{equation}
\begin{equation}0 \le \eta_{p}(L) \le \beta_0(L)-1.   \label{nb2}
\end{equation}

\end{thm}

\begin{proof}
We note that there are symmetries of $Q$ which induce any permutation of the three nonzero elements of $H_1(Q)$.
Also the kernel of the map induced by the inclusion  $H_1(Q) \rightarrow H_1(D)$ is generated by the class of the fiber. It follows that, given any link $L \subset Q$, 
we can find a diffeomorphism of $Q$, so that the image of $L$ under this diffeomorphism bounds a  proper surface $G$ in $D$. As diffeomorphisms leave our invariants 
unchanged,  we may assume without loss of generality  that $L$  bounds a proper connected surface in  $D$.

We  will let $\nu$ stand for  tubular neighborhood.
 Let $G'$ be a push-off of $G$ with no multiple points, and let $\hat G= G \sqcup G'$ with boundary $\hat L= L \sqcup L'$.  By definition, $\si_{2b/p}(L)$ is $\SignE(D)$ where $D \setminus \nu(G)$ is equipped with the extension of   $2b \phi_{L}$.  Here  we make  use of the fact that $D$ has a trivial intersection pairing and also that the linking number of $L$ and $L'$ will be zero (by Proposition \ref{linking}). 
 We can identify  $\nu( G)\setminus G$ with $G \times (D^2 \setminus \{0\})$ in such a way that the restriction of $\phi_{L}$ is trivial on $H_1(G).$ As $\phi$ is non-trivial on the meridians of $G$ and $\hat G$,  $\CE_*$ vanishes on  the sphere bundle of $\nu(G),$ 
 and the sphere bundle of $\nu(\hat G).$ 
 In this way using  Mayer-Vietoris, we have  that $\SignE(D)=\SignE(D\setminus \nu(G))$, 
and  $\si_{b/p}(\hat L))$ is $\SignE(D\setminus \nu(\hat G))$ where $D\setminus \nu(\hat G)$ is equipped with the extension of $b \phi_{\hat L,p}$. 
Note we can obtain $D\setminus \nu(\hat G)$ (equipped the extension of $b \phi_{\hat L,p}$) from  $D\setminus \nu(G)$ (equipped the extension of $2b \phi_{\hat L,p}$) by gluing on $G \times D_2$, where $D_2$ is a disk with two holes. Here  $G \times D_2$ is equipped with a cohomology class that is trivial on $H_1(G)$ and  sends the outer boundary component of $D_2$ to $2b$ and the two inner boundary components to $b$, in the obvious way.   
 The  ordinary intersection form on $H_1(G)$ can be given by  a block matrix of the form $$\begin{bmatrix}
0& I_n & 0\\
-I_n& 0&0\\
0&0&0_m\\
\end{bmatrix}.$$ One has $\CE_*(\partial D_2)=0$. By an Euler characteristic argument, $\CE_*( D_2)$   is concentrated in dimension one, where it has dimension one. It follows that  the skew Hermitian intersection form on  
$\CE_1( D_2)$
 is given by a $1\times 1$ matrix with a non-zero
purely imaginary entry. By the Kunneth Theorem, $\SignE(G \times D_2)=0$.  A Mayer-Vietoris then shows that
$\SignE(D\setminus \nu(\hat G))= \SignE(D\setminus \nu( G))$, and thus:

\begin{equation} \si_{2b/p}( L)=\si_{b/p}(\hat L). \label{sid} \end{equation}

Since $\hat L$ is null-homologous, $\si_{2b/p}( L)$ can be computed  by taking the signature of $(1-{\zeta_p}^{b}) V+  (1-{\zeta_p}^{-b})V^t$
where $V$ is a Seifert matrix for a Seifert surface for  $\hat L$. But $\Sign((1-e^{2 s\pi i} )V+ (1- e^{-2 s\pi i}) V^t)$ is a step function of  $s\in (0,1/2)$ with only finitely many jumps, and is the claimed extension.

One also has that  $\eta_p(\hat L)= \nullity((1-e^{2 b\pi i/p} )V+ (1- e^{-2 b\pi i/p}) V^t)$. It is not hard to see that  
$\nullity((1-e^{2 s\pi i} )V+ (1- e^{-2 s\pi i}) V^t) = \nullity(V -  e^{-2 s\pi i} V^t)$  is a constant function of $s$ except at finitely many points $s\in (0,1/2)$. 
This nullity is the nullity of $V -  x V^t$ over the field of  complex rational functions (i.e. the quotients of two polynomials with complex coefficients).

As $\dim( \CE_1(D_2))=1$, and $\dim( \CE_0(D_2))=0$, we have that $\dim\left( \CE_1(L \times D_2)\right)=\beta_0(L)$. 
As $ \CE_*(L \times \text { a boundary component of $D_2$})$ vanishes,   a Mayer-Vietoris sequence now yields  
\begin{equation} \eta_p(\hat L)= \eta_p(L)+ \beta_0(L). \label{nr} \end{equation}
Thus except for finitely many values of $p$,  $\eta_p(L)$ is  a constant function of $p$. 

From the Seifert surface point of view, it is easy to see that:
\begin{equation} \si_{b/p}(\hat L) +\eta_{p}(\hat L) = \beta_0(\hat L)-1\pmod{2}  .\label{snc3}
\end{equation} 
Using (\ref{snc3}), (\ref{sid}) and (\ref{nr}), one obtains (\ref{snc2}). One has that $\dim(H_1(Q\setminus L, \BZ_p))= \beta_0( L).$ Then the estimates for eigenspace homology used in section \ref{section.bound} allow us to conclude (\ref{nb2}). \end{proof}

\begin{rem} {\em The fact that  our signatures,  parameterized in this way, are the values of a step function was suggested by the work of  Cha and Ko \cite{CK}.  Although we did not use their work directly, their idea of a generalized Seifert surface suggested  $\hat L$.
We remark that their statement  \cite[p.1163]{CK} that all the signatures that the first author defined in \cite{G1} can be derived from their signature jump function does not apply to signatures indexed by a  non-zero $\gamma$  \cite[p.310-311]{G1}. }
\end{rem}

\section{Casson-Gordon invariants of certain graph manifolds}\label{section.graph}

Casson and Gordon \cite {CG} defined  certain invariants $\si(M,\phi)$ (they are disguised forms of the Atiyah-Singer $\al$-invariant) of a closed 3-manifold
equipped with a finite cyclic cover specified by  a character $\phi: H_1(M_\Ga) \rightarrow \BZ_p$ (we only consider the prime case in this paper). They are discussed in \cite{G1,G2} where the author defines a related nullity invariant $\eta(M,\phi)$ which is just the dimension of the $e^{2 \pi i/p}$-eigenspace for the action of the generating covering transformation on the first homology of the cover associated to $\phi$.

If $\Gamma$ is a disjoint union of trees  whose vertices are weighted by integers,  let $W_\Gamma$ denote the 4-manifold given by plumbing disk bundles over  2-spheres according to $\Gamma$. In this section, we consider any tree,  i.e. our graph does not need to be related to a complex scheme in $\rp.$  Let $A_\Ga$ be the matrix associated to the weighted tree $\Ga$, or equivalently the matrix for the intersection form on $H_2(W_\Gamma)$ with respect to the basis of $H_2(W_\Gamma)$ given by the fundamental classes of the $2$-spheres. 

Let $M_\Gamma$ denote the boundary of $W_\Ga$. It is a graph manifold.  We have that $H_1(M_\Ga)$ is generated by the oriented fibers of the circles bundles (on the boundary)  over points  on the 2-sphere bases away from where the plumbing takes place. Relations  among these generators are given by the columns  of $A_\Ga $. These columns form a complete set of relations.  Every connected regular $\BZ_p$- covering space of 
$M_\Ga$ (with a choice of generator of the group of covering transformations) can be specified by  a row vector $\vec \fc$ such that 
$\vec \fc   A_\Ga \equiv 0 \pmod{p}$. Moreover $\vec \fc$ is not congruent to zero modulo $p$. We call  $\vec \fc$ a $p$-characteristic vector. If two such vectors agree modulo $p$, they describe the same cover. We can uniquely specify the cover by insisting that the entries of  $\vec \fc$ lie in the range $[0,p-1]$.  Let 
$\phi_{\vec \fc}: H_1(M_\Ga) \rightarrow \BZ_p$ denote the non-zero homomorphism which classifies the  $\BZ_p$-regular covering space which is specified by the vector  $\vec \fc$. 

We sometimes record this extra information for a $\BZ_p$-covering space  of $M_\Ga$ by labeling the vertices with the corresponding entry of $\vec \fc$ in parenthesis. Then the compatibility condition $\vec \fc_\Ga   A_\Ga \equiv 0 \pmod{p}$ becomes a local condition at each vertex: 
the values in parenthesis times the weight plus the sum of the entries in parentheses of the adjoining vertices should be zero modulo $p$.
We will say $\vec \fc$  is  $p$-characteristic for $\Ga$ at a vertex $\fv$ if this last condition holds, i.e. $(\vec \fc   A_\Ga)_{\fv} \equiv 0 \pmod{p}.$

If $x\in \BZ_p$, let $r_p(x)$ denote the unique integer congruent to $x$ modulo $p$ in the range $[0,p-1]$.  If $\vec x$ is instead a vector in ${\BZ_p}^n$, for some $n$, $r_p(\vec x)$  is defined in this same way but  entry by entry.

\begin{itemize}
\item If $\fv$ is a vertex of $\Ga$, and  $\vec {\fc}$ is  vector indexed by the vertices of  $\Gamma$, we let  
$\vec {\fc}_\fv$ be the coefficient of $\vec {\fc}$ associated
to $\fv$.

\item  Let $Z_{\vec {\fc}}$ be the subgraph of $\Ga$ consisting of vertices $\fv$ such that $r_p(\vec {\fc}_\fv) =0$   and all the edges joining such vertices to each other. Here $Z$ stands for zero.

\item We let $\fZ_{\vec {\fc}}$ be the subgraph of $\Ga$ consisting of vertices $\fv$ such that $r_p(\vec {\fc}_\fv) =0$  that are not connected by an edge to 
any vertex $\fw$ with  $r_p(\vec {\fc}_\fw) \ne 0$,  and all the edges joining such vertices to each other.  Here we can think of $\fZ$ as standing for ``zero and not contiguous to non-zero."

\item Let $e_{\vec {\fc}}$ be the number of edges in $\Ga$  
that join  two vertices, say  $\fv$ and $\fw$, with $r_p(\vec {\fc}_\fv)\ne 0$ and $r_p(\vec {\fc}_\fw)\ne 0$.

\end{itemize}

\begin{thm}\label{cggraph} If $\Gamma$ is a connected tree, 
 \begin{align}
 \sigma(M_\Ga, \phi_{\vec {\fc}}) &= 
 \frac 2 {p^2} \left(r_p(\vec {\fc}) (A_\Ga )r_p(-\vec {\fc})^t \right)+ \Sign( A_{\fZ_{\vec {\fc}}} )-e_{\vec {\fc}} - \Sign(A_\Ga)
 \label{scg}  \\ 
\eta (M_\Ga, \phi_{\vec {\fc}}) 
 &=
  \nullity( A_{{\fZ_{\vec {\fc}}} }) + |v(\fZ_{\vec {\fc}})|-2|  v( Z_{\vec {\fc}} ) |+|v(\Ga)|-e_{\vec {\fc}}-1.
   \label{ncg}
 \end{align}
 \end{thm}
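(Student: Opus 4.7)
The plan is to construct a 4-manifold $\tilde W$ bounding the cyclic cover of $M_\Gamma$ classified by $\phi_{\vec{\fc}}$ and to compute $\sigma$ and $\eta$ by using the plumbing structure of both $W_\Gamma$ and the cover. Let $F\subset W_\Gamma$ be the union of those plumbing spheres $S_\fv$ for which $r_p(\vec{\fc}_\fv)\ne 0$. The condition $\vec{\fc}\,A_\Gamma\equiv 0\pmod p$ is precisely the consistency condition that makes $F$ the branch locus of a cyclic $p$-fold branched cover $\pi\colon \tilde W\to W_\Gamma$, with $\partial\tilde W$ the cover of $M_\Gamma$ classified by $\phi_{\vec{\fc}}$. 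Since the self-intersections $w_\fv$ need not be divisible by $p$, I would interpret $\tilde W$ as an orbifold and appeal to the Atiyah--Singer $G$-signature theorem; equivalently, one first blows up $W_\Gamma$ enough times to reduce to the smooth case and verifies by a direct bookkeeping check that the right-hand side of (\ref{scg}) is invariant under such blow-ups (the contributions to $\Sign(A_\Gamma)$, to the bulk term, and to $e_{\vec{\fc}}$ all cancel). By the definition of the Casson--Gordon invariant in Section~\ref{def.sub}, applied with empty link and closed $G=F$, one then has $\sigma(M_\Gamma,\phi_{\vec{\fc}})=\SignE(\tilde W)-\Sign(W_\Gamma)+\frac{2}{p^2}a(p-a)(F\cdot F')$ and $\eta(M_\Gamma,\phi_{\vec{\fc}})=\dim_\BC\CE_1(\partial\tilde W)$.

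The core computation is $\SignE(\tilde W)$, which I would carry out by a Mayer--Vietoris (equivalently Wall non-additivity) decomposition along the plumbing pieces. Each disk-bundle neighborhood of a branching sphere $S_\fv$ contributes, after combining with the $F\cdot F'$ correction, a $\zeta_p$-eigenspace signature of $\tfrac{2}{p^2}\, w_\fv\, r_p(\vec{\fc}_\fv)\,r_p(-\vec{\fc}_\fv)$; each plumbing edge between two non-zero vertices contributes an off-diagonal term $\tfrac{2}{p^2}\cdot 2\, r_p(\vec{\fc}_\fv)\,r_p(-\vec{\fc}_\fw)$ coming from the intersection pairing of the lifted spheres. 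Summed over the non-zero subgraph these yield the bulk term $\tfrac{2}{p^2}\, r_p(\vec{\fc})\,A_\Gamma\, r_p(-\vec{\fc})^t$. For a vertex with $\vec{\fc}_\fv\equiv 0\pmod p$ the sphere $S_\fv$ lifts to $p$ permuted copies, and these contribute to the $\zeta_p$-eigenspace only when $\fv$ lies in $\fZ_{\vec{\fc}}$ (i.e.\ is fully isolated from the non-zero locus), giving the correction $\Sign(A_{\fZ_{\vec{\fc}}})$. The $-e_{\vec{\fc}}$ term compensates for edges between non-zero vertices that would otherwise be double-counted, and $-\Sign(A_\Gamma)$ is just $-\Sign(W_\Gamma)$ from the definition. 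For the nullity (\ref{ncg}) I would run the same decomposition one degree lower, using the long exact sequence of $(\tilde W,\partial\tilde W)$ in $\zeta_p$-eigenspaces: the term $\nullity(A_{\fZ_{\vec{\fc}}})$ records the kernel of the restricted intersection form, while the combinatorial terms $|v(\fZ_{\vec{\fc}})|-2|v(Z_{\vec{\fc}})|+|v(\Gamma)|-e_{\vec{\fc}}-1$ count contributions from permutation representations on the $p$-fold covered spheres, together with the single $H_0$-type relation on $\tilde M_\Gamma$.

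The main obstacle I expect is the careful local accounting at vertices with $\vec{\fc}_\fv\equiv 0$ that are adjacent to vertices with $\vec{\fc}_\fw\not\equiv 0$: under the branched cover their plumbing neighborhood looks like a lens-space cobordism, and one must verify that its $\zeta_p$-signature contribution vanishes, so that only $\fZ_{\vec{\fc}}$ (and not the larger $Z_{\vec{\fc}}$) produces the $\Sign(A_{\fZ_{\vec{\fc}}})$ term. The same cancellation produces the $-2|v(Z_{\vec{\fc}})|$ coefficient in the nullity formula. Once these local lens-space cancellations are established, the formulas (\ref{scg}) and (\ref{ncg}) assemble by additivity of $\SignE$ along the Mayer--Vietoris decomposition.
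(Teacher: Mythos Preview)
Your strategy of building a singular $\BZ_p$-branched cover of $W_\Gamma$ along the union of non-zero spheres and computing $\SignE$ piece by piece is essentially the method of \cite{G1}, which the paper invokes only for the special case $Z_{\vec\fc}=\emptyset$. For the general case the paper takes a different and cleaner route: it applies \cite{G1} as a black box to the non-zero subgraph $\CN_{\vec\fc}$ to obtain $\sigma(M_{\CN_{\vec\fc}},\phi_{\vec\fc_\CN})$ and $\eta=0$, and then constructs a cobordism $V$ from $M_{\CN_{\vec\fc}}$ to $M_\Gamma$ by attaching $W'_{Z_{\vec\fc}}$ (the plumbing on the zero subgraph with open disks removed at the edges joining it to $\CN_{\vec\fc}$) to a collar. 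The crucial point is that $\phi_{\vec\fc}$ extends over all of $V$ \emph{with no branch locus}, so the cover of $V$ is free. Since $W'_{Z_{\vec\fc}}$ deformation retracts to a $2$-complex $X$ of spheres (indexed by $\fZ_{\vec\fc}$) and punctured spheres (indexed by $v(Z_{\vec\fc})\setminus v(\fZ_{\vec\fc})$), and connected $p$-fold covers of punctured spheres carry no $H_2$, one reads off $\SignE(V)=\Sign(A_{\fZ_{\vec\fc}})$ immediately, while ordinary Novikov additivity gives $\Sign(V)=\Sign(A_\Gamma)-\Sign(A_{\CN_{\vec\fc}})$. This entirely bypasses the orbifold/blow-up bookkeeping you anticipate. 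The nullity formula likewise comes from an Euler-characteristic count on $X$ together with the long exact sequence of $(W'_{Z_{\vec\fc}},\partial W'_{Z_{\vec\fc}})$ in eigenspaces, rather than from a global Mayer--Vietoris on the branched cover.

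Two of your heuristic attributions would need correction if you pursued your route. The $-e_{\vec\fc}$ term in \cite{G1} is not a ``double-counting'' correction; it is the sum of signature defects of the lens-space cobordism pieces at the plumbing intersections of non-zero spheres. And your claimed edge contribution $2\,r_p(\vec\fc_\fv)\,r_p(-\vec\fc_\fw)$ is not the symmetrized off-diagonal term $r_p(\vec\fc_\fv)\,r_p(-\vec\fc_\fw)+r_p(\vec\fc_\fw)\,r_p(-\vec\fc_\fv)$ that actually appears in $r_p(\vec\fc)\,A_\Gamma\,r_p(-\vec\fc)^t$. These local contributions are precisely the content of \cite{G1}; the paper's reduction to that case via an unbranched cobordism is what lets it avoid reproving them.
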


\begin{proof}
This result, in the case that $r_p(\vec {\fc}_\fv) \ne 0$ for all vertices $\fv$ of $\Ga$,  was proved in \cite{G1}.
The formula for $\sigma(M_\Ga, \phi_{\vec {\fc}})$, in this special case,   was reproved in a very different way in \cite{G4}.
We let $\CN_{ \vec {\fc}}$ be the subgraph consisting of vertices $\fv$ such that $r_p(\vec {\fc}_\fv) \ne 0$ and all the edges joining such vertices to each other. Here $\CN$ stands for non-zero. Let $\vec {\fc}_\CN$ be the corresponding subvector of $\vec {\fc}$. Then $\vec {\fc}_\CN$ is $p$-characteristic for $\CN_c$ and is non-zero modulo $p$ at each  vertex. Recall $A_{\CN_c}$ denotes the  matrix associated to ${\CN_c}$.  Then we note that
$r_p(\vec {\fc}) A_\Ga (r_p(-\vec {\fc}))^t= r_p(\vec {\fc}_\CN) A_{\CN_{\vec {\fc}}}  (r_p(-\vec {\fc}_\CN))^t$. Thus according to \cite[(3.7) (3.8)]{G1}

\begin{align}\label{nice}
\si(M_{\CN_c}, \phi_{\vec {\fc}_\CN}) & =
 \frac 2 {p^2} \left(r_p(\vec {\fc}) (A_\Ga) r_p(-\vec {\fc})^t \right) -e_{\vec {\fc}} - \Sign({A_{\CN_{\vec {\fc}}}}) \\ \label{niceeta}
\eta_{p}(M_{\CN_c}, \phi_{\vec {\fc}_\CN})& =0.
 \end{align}

Let $\fJ_{ \vec {\fc}}$ denote the set of edges of $\Ga$ which join vertices of $\CN_{ \vec {\fc}}$ to vertices of ${Z_{ \vec {\fc}}}.$ 
One can construct $W_{\Ga}$ by 
plumbing
 together the two plumbings $W_{\CN_{ \vec {\fc}}}$ and $W_{Z_{ \vec {\fc}}}$  along copies of $D^2 \times D^2$ that are indexed  $\fJ_{ \vec {\fc}}$.  
  We identify each copy of $D^2 \times D^2$  at these plumbing sites, 
 so that
each copy of $D^2 \times \{0\}$   lies on a  2-sphere indexed by  the vertices of ${Z_{ \vec {\fc}}},$ and each copy of  $\{0\} \times D^2$   lies on a 2-sphere indexed by 
the vertices of ${\CN_{ \vec {\fc}}}.$
 Let $W'_{Z_{ \vec {\fc}}}$ be $W_{Z_{ \vec {\fc}}}$ with a copy of  $\Int(D^2) \times D^2$  deleted at each site. So  $W_{\Ga}$ is obtained by gluing together $W_{\CN_{ \vec {\fc}}}$ and $W'_{Z_{ \vec {\fc}}}$  along copies of $S^1 \times D^2$ that are indexed by $\fJ_{ \vec {\fc}}$. We will denote these solid tori by $T_j$ where $j \in \fJ_{ \vec {\fc}}$.  These
 $T_j$ can be identified with subsets  of $W'_{Z_{ \vec {\fc}}}$ and can be identified with subsets  of  
 $M_{\CN_{ \vec {\fc}}} \subset W_{\CN_{ \vec {\fc}}}$. 
 The homomorphism  $\phi_{\vec {\fc}_\CN}$ is non-zero on
 the cores of the $T_j$.

 Within $W_{\Ga}$, there is a cobordism $V$  with boundary $M_\Ga \sqcup -M_{\CN_{\vec {\fc}}}$ which can be constructed   by attaching 
  $W'_{Z_{ \vec {\fc}}}$ to $(-\partial M_{\CN_{\vec {\fc}}}) \times I$ along the union of the solid tori  $T_j$.
             The homomorphism  $\phi_{\vec {\fc}_\CN}$   extends  uniquely to all of $H_1(V)$.
 Thus $V$ and the associated $\BZ_p$-cover of $V$   can be used to compute
 $ \sigma(M_\Ga, \phi_{\vec {\fc}}) -\si(M_{\CN_{\vec {\fc}}}, \phi_{\vec {\fc}_\CN}).$
 One can also obtain  $V$ by deleting $(W_{\CN_{ \vec {\fc}}}\setminus \text{a collar on its boundary})$ from   $W_{\Ga}.$
 Thus  $$\Sign(V)=  \Sign( A_{\Ga} )-\Sign( A_{\CN_{ \vec {\fc}}}).$$

There is a deformation retraction of  $W'_{Z_{ \vec {\fc}}}$ to the 2-complex $X$  given as a union of some 2-spheres  $S_\fv$
and indexed by $\fv \in v(\fZ_{\vec {\fc}})$  and  some punctured   2-spheres\footnote{By a punctured 2-sphere,  we actually mean a 2-sphere with an open disk neighborhoods of some isolated  points removed.} $S_\fw$  indexed by  $ \fw \in v(Z_{\vec {\fc}})\setminus v(\fZ_{\vec {\fc}})$. These spheres and punctured spheres are  identified along points as specified by the edges of the tree  $Z_{ \vec {\fc}}$. 
  The cover of each punctured two sphere is a connected surface with non-empty boundary. The cover of each $S_\fv$ 
  is consists of $p$ copies of $S^2$ being permuted  cyclically.

 As $\CE_*(T_j)=0$,  $\CE_2(V)=\CE_2(\partial (\CN_{\vec {\fc}}) \times I) \oplus \CE_2(W'_{Z_{ \vec {\fc}}})$, and the summand 
 $\CE_2(\partial (\CN_{\vec {\fc}}) \times I)$ is in the radical of the intersection pairing on $\CE_2(V)$.
  Thus 
\begin{equation}\SignE(V)= \SignE(W'_{Z_{ \vec {\fc}}}).\notag\end{equation}
   Recall that  $W'_{Z_{ \vec {\fc}}}$ and its cover  deformation retract to the  2-complex $X$ and its cover, respectively. The  covers of the punctured 2-spheres  have no 2-dimensional homology 
   while the trivial covers of the connected 2-spheres each contribute one generator to
$\CE_2(X)$. We have that $\CE_2(W'_{Z_{ \vec {\fc}}})=  \CE_2(W_{\fZ_{ \vec {\fc}}})$.
Moreover the cover of  $W_{{\fZ_{\vec {\fc}}}}$ is trivial. Thus the Hermitian intersection form on 
$\CE_2(
W
_{\fZ_{\vec {\fc}}}
)$
is 
 given by 
 $A_{\fZ_{\vec {\fc}}}$.  
 Thus \begin{equation} \SignE(V)= \SignE(W'_{Z_{ \vec {\fc}}})=\Sign( A_{\fZ_{\vec {\fc}}} ).\notag\end{equation}
 One may conclude that: 
\begin{equation}\label{diff}  \sigma(M_\Ga, \phi_{\vec {\fc}}) -\si(M_{\CN_{\vec {\fc}}}, \phi_{\vec {\fc}_\CN}) = \SignE(V)-\Sign(V)=
\Sign( A_{\fZ_{\vec {\fc}}} )- \Sign( A_{\Ga} )+\Sign( A_{\CN_c}). 
\end{equation}
Equation  (\ref{scg}) follows from (\ref{nice}) and (\ref{diff}). 

 We have that: \[M_\Ga=\left( M_{\CN_{ \vec {\fc}}} \setminus  \sqcup \Int  T_j \right) \cup_{\cup_j \partial T_j} 
  \left( \partial ( W'_{Z_{\vec {\fc}}}) \setminus  \sqcup_j \Int T_j \right).\]
 By (\ref{niceeta}),  
$\CE_1(M_{\CN_{ \vec {\fc}}})=0$. Since $\CE_1(\partial T_j)=0$, and $\CE_1(T_j)=0$, we may conclude that 
$\CE_1\left( M_{\CN_{ \vec {\fc}}} \setminus  \sqcup_j \Int T_j \right)=0$. Similarly one has that 
$$\CE_1\left(  \partial ( W'_{Z_{\vec {\fc}}}) \setminus  \sqcup_j T_j \right)= \CE_1\left( \partial ( W'_{Z_{\vec {\fc}}})\right).$$
Thus $$\eta (M_\Ga, \phi_{\vec {\fc}})  = \dim \left(\CE_1\left( M_\Ga \right)\right)=  \dim \left(\CE_1\left( \partial ( W'_{Z_{\vec {\fc}}})\right)\right).$$

The  connected cover  of $X$  has eigenspace homology that is concentrated  in dimensions one,  and two. 
Viewing $W'_{Z_{\vec {\fc}}}$ as the union of 
$|v(\fZ_{\vec {\fc}})|$ 2-spheres and  $\fJ_{ \vec {\fc}}$ punctured 2-spheres and using an eigenspace Mayer-Vietoris sequence, we have that  $$\dim \left(\CE_2\left( W'_{Z_{\vec {\fc}}}\right)\right)  = |v(\fZ_{\vec {\fc}})|.
 $$
 
The Euler characteristic of $X$  
 is given by
 $ 2|v(Z_{\vec {\fc}})|-e_0$, where $e_0$ is the number of edges  of $\Ga$,  
 for which for at least one of  its endpoints $\fv$, we have $r_p(\vec {\fc}_\fv) = 0$. We have that  $e_0= |v(\Ga)|-e_{\vec {\fc}}-1$. But the Euler characteristic  of $\CE_*(X)$ is also the Euler characteristic of $X$. It follows that 
$$ \dim(\CE_1(W'_{Z_{\vec {\fc}}}))= \dim \left(\CE_2\left( W'_{Z_{\vec {\fc}}}\right)\right) - \chi(\CE_*(X))=
|v(\fZ_{\vec {\fc}})| -2|  v( Z_{\vec {\fc}} ) |+|v(\Ga)|-e_{\vec {\fc}}-1.$$

We have an exact sequence:
\[ 
\begin{CD}
\CE_2(W'_{Z_{\vec {\fc}}}) @>A_{Z_{\vec {\fc}}}>> \CE_2(W'_{Z_{\vec {\fc}}},\partial (W'_{Z_{\vec {\fc}}}))@>>> 
\CE_1(\partial (W'_{Z_{\vec {\fc}}}) )@>>>\CE_1(W'_{Z_{\vec {\fc}}}) @>>> 0\\
\end{CD}
,\]
where the last term is zero as $ \CE_1(W'_{Z_{\vec {\fc}}},\partial (W'_{Z_{\vec {\fc}}}))= \CE^3(W'_{Z_{\vec {\fc}}})= \CE^3(X)=0$. Here we use $\CE^i$ to denote the eigenspace in the $i$th-cohomology of the cover. 
The exactness of the sequence implies (\ref{ncg}).

\end{proof}

\section{Formulas for signatures of graph links}\label{section.glform}

Let $\CQ$ be a $\BZ_p$-homology sphere, and $L$ be an  link in $\CQ$ which can be described as a graph link by a plumbing graph $\Ga$ which is further decorated by signed arrows.  
We note since the boundary of the plumbing is a $\BZ_p$-homology sphere, the plumbing graph must be a connected tree.  
Let $s$ be the vector indexed by the vertices of $\Ga$ whose $v$th entry is the signed count  of arrows with tail at $v$. 
 This follows from the long exact sequence of the pair $(W_\Gamma, \CQ)$.
We  let $\de=  \vec s {A_\Ga}^{-1} {\vec s\ }^t \in   \BQ $.  We denote by $\vec u$ the  vector indexed by $V(\Ga)$ with the $v$-th entry of $-s (A_\Ga)^{-1}$. The entries of $\vec u$ lie in $\BQ(p)$, the ring of rational numbers whose denominators are not divisible by $p$.
Let $\Gamma^+$ be obtained from $\Gamma$ by converting  arrowheads into vertices weighted zero and  form $\vec {u}^+$ by  assigning $\pm 1$ to these new vertices according to the sign of the arrows. The vector $\vec {u}^+$ will not in general be $p$-characteristic for $\Gamma^+$, but we still define 
${\fZ_{\vec {u}^+}}$, $A_{\fZ_{\vec {u}^+}}$, and $e_{\vec {u}^+}$  as in the previous section.
If  $x= \frac \alpha \beta \in \BQ(p)$ where $\alpha$ and $\beta$ are integers and $\beta$ is not divisible by $p$, we can pick an integer $\beta^*$ such that $\beta \beta^* =1 \pmod{p}$ and define $r_p(x)=r_p( \alpha \beta^*)$.  We extend  $r_p$  to vectors in the same way as before.

\begin{thm} \label{cggl} For $ a  \ne 0 \pmod{p}$, and for $L$ described in the paragraph above, we have:
\begin{align} 
  \sigma_{a/p}(L) =  &\frac 2 {p^2}\left( r_p(a{\vec u}^+) (A_{\Ga^+}) r_p(-a{\vec u}^+)^t
   +r_p(a) r_p(-a) \de \right)- \label{asss} \\ &\Sign(A_{\Gamma}) +\Sign( A_{\fZ_{\vec {u}^+}} )-e_{\vec {u}^+}
   \notag
\\
 \eta_p(L)= &
  \nullity(A_{
  \fZ_{
  {\vec u^+}
  } 
  })
   +
   |v(\fZ_{ {\vec u^+} })|- 2|  v( Z_{{\vec u^+}})  |+|v(\Ga^+)|-e_{{\vec u}^+}-1.
 \label{alme}
\end{align} 
One has that  ${\vec u}^+$ is $p$-characteristic  for $\Ga^+$ at every vertex of $\Gamma.$

\end{thm}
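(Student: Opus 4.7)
My plan is to reduce the computation of $\sigma_{a/p}(L)$ to the Casson--Gordon invariant formula for graph manifolds already established in Theorem \ref{cggraph}, with an additional correction that accounts for the rational self-linking of $L$ in $\CQ$.

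The first step is to identify a natural rational Seifert chain $G \subset W_\Gamma$ with $\partial G = L$. Taking
\[
G \;=\; \sum_{v \in V(\Gamma)} s_v\, D_v \;+\; \sum_{v \in V(\Gamma)} \vec u_v\, S_v,
\]
where $D_v$ is a fiber disk above vertex $v$ and $S_v$ is the base $2$-sphere, the defining identity $\vec u A_\Gamma = -\vec s$ yields $G \cdot S_w = 0$ for every $w$. A short bilinear computation then gives $G \cdot G' - \Lk(\partial G, \partial G') = \pm \delta$, producing the summand $\frac{2a(p-a)}{p^2}\delta$ that appears in formula (\ref{asss}).

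Next I would enlarge $W_\Gamma$ to $W_{\Gamma^+} = W_\Gamma \cup_\CQ N$, where $N$ is the trace cobordism from $\CQ$ to $M_{\Gamma^+}$ built by attaching zero-framed $2$-handles to $\CQ \times I$ along $L \times \{1\}$. Inside $W_{\Gamma^+}$, each component of $L$ bounds an embedded disk in the corresponding arrow-sphere $S_w$, which has trivial self-pairing since $S_w$ has Euler number $0$. The extension of the character $a\phi_L$ across $W_{\Gamma^+}$ minus these disks is encoded precisely by the vector $a\vec u^+$, with the arrow-vertex entries $\pm a$ recording meridian values. The final assertion of the theorem--that $\vec u^+$ is $p$-characteristic at every vertex of $\Gamma$--is an immediate check: at each vertex $v$ of $\Gamma$,
\[
(\vec u^+ A_{\Gamma^+})_v \;=\; (\vec u A_\Gamma)_v + \sum_{w\text{ arrow at }v} \epsilon_w \;=\; -s_v + s_v \;=\; 0.
\]

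The engine of the argument is Novikov additivity applied to the decomposition $W_{\Gamma^+} = W_\Gamma \cup_\CQ N$, simultaneously for the ordinary signature and for the eigenspace signature of the $\BZ_p$-cover. This splits $\SignE(W_{\Gamma^+}) - \Sign(W_{\Gamma^+})$ into a $W_\Gamma$-contribution (which recovers $\SignE(W_\Gamma) - \Sign(W_\Gamma)$, the main piece of $\sigma_{a/p}(L)$) and an $N$-contribution (which absorbs the self-linking term and the signature defect $\Sign(A_{\Gamma^+}) - \Sign(A_\Gamma)$). The graph-theoretic expression $\frac{2}{p^2} r_p(a\vec u^+) A_{\Gamma^+} r_p(-a\vec u^+)^t + \Sign(A_{\fZ_{\vec u^+}}) - e_{\vec u^+}$ on the $W_{\Gamma^+}$ side is then obtained by repeating the computation in the proof of Theorem \ref{cggraph} applied to $\Gamma^+$ and $\vec u^+$, using that $\vec u^+$ \emph{is} $p$-characteristic at the vertices of $\Gamma$. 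The nullity formula (\ref{alme}) follows in parallel from the eigenspace Mayer--Vietoris sequence for $W_{\Gamma^+} = W_\Gamma \cup N$ and the corresponding nullity count from Theorem \ref{cggraph}, evaluated on $\Gamma^+$.

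The hard part will be the careful bookkeeping around the fact that $\vec u^+$ is \emph{not} $p$-characteristic at the arrow vertices (the defect at an arrow emanating from $v$ is precisely $\vec u_v$). This failure is exactly what produces the self-linking correction $\tfrac{2a(p-a)}{p^2}\delta$, and reconciling it cleanly requires matching three separate contributions: the rational self-intersection $G \cdot G' = \pm\delta$ of the Seifert chain in $W_\Gamma$, the signature defect $\Sign(A_{\Gamma^+}) - \Sign(A_\Gamma)$ produced by the zero-weight leaves of $\Gamma^+$, and the signature and nullity of the $\BZ_p$-cover of the cobordism $N$. Pinning down the signs and push-off framings so that these contributions combine to precisely the stated formulas is where the substantive technical work lies.
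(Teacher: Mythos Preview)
Your overall strategy---attach $2$-handles along $L$ to pass from $\CQ$ to a closed graph manifold, invoke Novikov additivity, and feed the result into Theorem~\ref{cggraph}---is exactly the paper's. Your check that $\vec u^+$ is $p$-characteristic at the vertices of $\Gamma$ is correct and is used in the paper in the same way.

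There is, however, a genuine gap in your Novikov step. You want to split $\SignE(W_{\Gamma^+})$ as $\SignE(W_\Gamma)+\SignE(N)$, but $\SignE(W_\Gamma)$ is not defined: the character $a\phi_L$ on $\CQ\setminus L$ does not extend over $W_\Gamma$ (the meridian of each fiber disk $D_v$ bounds in the base sphere $S_v$, killing it), and your rational $2$-chain $G$ cannot serve as a branch locus. The rational chain is perfectly good for computing $\Lk(L,L')=-\delta$ (this is Proposition~\ref{linking}), but that is its only role. The paper sidesteps this by never using $W_\Gamma$ in the signature computation at all: it takes an \emph{arbitrary} pair $(W,G)$ as in the definition of $\sigma_{a/p}$, glues on the $2$-handle cobordism $U$ to form $V=W\cup_\CQ U$ with closed branch surface $E=G\cup_L F$, and applies Novikov additivity there. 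The Casson--Gordon invariant of $\partial V=\CQ^*=M_{\Gamma^*}$ is then evaluated purely from the graph $\Gamma^*$ via Theorem~\ref{cggraph}, with no further reference to $W$.

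The ``hard part'' you flag---that $\vec u^+$ fails to be $p$-characteristic at the arrow vertices---is also bypassed rather than confronted. The paper first chooses the surgery framings $f_i$ so that $\phi_L$ \emph{does} extend to $\CQ^*$; this forces $\vec u^*$ to be honestly $p$-characteristic for $\Gamma^*$, so Theorem~\ref{cggraph} applies with no caveats. Only after the formula is in hand does one observe that its right-hand side is unchanged upon setting every $f_i=0$ (replacing $\Gamma^*,\vec u^*$ by $\Gamma^+,\vec u^+$). Thus the delicate three-way reconciliation you anticipate is never needed.
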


\begin{proof}
For the proof we assume $0 <a <p$, then $r_p(a) r_p(-a)  =( p-a)a$.
Suppose that we have picked $W$, a 4-manifold with $\partial  W=\CQ$,  and $G$, a proper  surface in $W$ with boundary $L\subset \CQ$ with no closed components, and  an extension of $\phi_{L}$ to $W \setminus G$.  So by (\ref{sigdef}),
\begin{equation}
{\SignE(W)} -{\Sign(W)} =    \sigma_{a/p}(L)+\frac 2 {p^2} a (p-a) \left(\Lk(\partial G, \partial G')- G \circ G'\right).  \label{defag}
\end{equation}

Each  graph link description of a graph link $L$ leads to a choice of a push-off $L'$ of $L$. This push-off
consists of a nearby fiber in the graph manifold description. We  pick $G'$ so that $\partial G'=L'$ with respect  to our graph link description. By  Proposition \ref{linking}, one has that 
$$\Lk(L,L')= -\de.$$ 

We can do framed surgery to $\CQ$ along $L$ so that $\phi_{L}$ extends to the result of surgery $\CQ^*$. 
Let $f_i$ for $1 \le i \le \be_0(L)$ denote the framings of this surgery with respect to the push-off $L'$. These $f_i$ are only determined modulo $p$, but fix a choice.
Let $U$ be the result of adding the corresponding 2-handles to $\CQ\times I$. Let $F$ denote the cores of the 2-handles union $L \times I$. We have that $\phi_{L}$ extends uniquely
to $U \setminus F$. We also consider $V= W \cup_\CQ U$ with boundary $\CQ^*$, containing the closed surface $E=G\cup_{L}  F.$
We note that there is a push-off $E'$ of $E$ such that
\[E \circ E'= G \circ G' +f \text{ where } f=\sum_{i=1}^{\be_0(L)} f_i.\] The extension of $\phi_{L}$ to $W\setminus G$ and  $U \setminus F$ gives us an cohomology class which we denote simply by $\phi \in H^1(V \setminus E, \BZ_p)$. We will also use $\phi$ for its various restrictions to subsets. In the rest of the proof,
we wish to discuss the branched cover of $V$ along $E$ classified by $a \phi$. We will omit the use of $a \phi$ as a subscript. As in \cite[Prop 3.5]{G1}, the Casson-Gordon invariant is given by
\begin{equation}
 \si(\CQ^*,a\phi)= \SignE(V) -{\Sign(V)}+2a(p-a)/{p^2}\left(  G \circ G' +f \right). \label{cga}
\end{equation} 
We decompose $U$ into $P$ the product  with $I$ of  the exterior of $L$ in $\CQ$ and $Q=U \setminus P$. The intersection form on $\CE_2(P)$ is  identically zero,  $\CE_*(Q)=0,$  and $\CE_*(P \cap Q)=0.$
It follows that $\CE_2(U)$ is identically zero. 
Thus $\SignE(U)=0$.
Novikov additivity gives us   that
\begin{equation}
  \SignE(V)= \SignE(W) \quad  \text{and} \quad \Sign(V)= \Sign(W)+  \Sign(U). \label{nov}
\end{equation}

 By (\ref{defag}), (\ref{cga}), and  (\ref{nov}),
 we have that:
 \begin{equation*}
 \si(\CQ^*,a\phi)=   \sigma_{a/p}(L)+ \frac 2 {p^2} a (p-a) \left( \Lk(L,L') +f \right)- \Sign(U). 
\end{equation*}

Recall $L$ is a graph link described by plumbing diagram $\Ga$   decorated with signed arrows  representing the components   of $L$. We enlarge $\Gamma$ by converting the arrowheads to vertices weighted by the $f_i$ chosen above. Let this new weighted graph be denoted
$\Gamma^*$.  
We have that $\CQ^*=M_{\Gamma^*}.$
Also by Novikov additivity, 
\[\Sign(U)= \Sign(A_{\Gamma^*})-\Sign(A_{\Gamma}).\]

Combining these results we have: 
 \begin{equation}
 \si(\CQ^*,a\phi)=   \sigma_{a/p}(L)+\frac 2 {p^2}a (p-a) (f- \de )+ \Sign(A_{\Gamma})-\Sign(A_{\Gamma^*}).
\label{sym}
\end{equation}

 Our derivation will proceed by using Theorem \ref{cggraph} to evaluate
$\si(\CQ^*,a\phi)$, substituting this new expression for $\si(\CQ^*,a\phi)$ in (\ref{sym}), then solving the resulting equation  for  $\sigma_{a/p}(L).$  

To evaluate using Theorem \ref{cggraph}, 
we need  the vector ${\vec u}^*$ which encodes the values of $\phi$ on the meridians of the surgery description
of $\CQ^*$ that is  given by $\Ga^*$. By Proposition \ref{Plumblinking},
the vector ${\vec u}$ indexed by $v \in V(\Ga)$ has $v$-th entry given by the value of $\phi$ on a meridian to the  2-sphere indexed by $v$. 
Let ${\vec u}^* $ be the vector indexed by  $v(\Ga^*)$ obtained by adjoining to the vector $\vec u$ some extra  entries, indexed by the vertices which replace the arrowheads of the arrows, which are  $\pm 1$ according to the signs on the arrows. We have that  ${\vec u}^*={\vec u}^+$.   Thus $r_p(a{\vec u}^*)$ encodes the values of $a\phi$ on the meridians of the surgery description of $\CQ^*$, and ${\vec u}^*$ is $p$-characteristic for $\Ga^*$.

As the weights of $\Ga^+$ and  $\Ga^*$ agree on the vertices of $\Ga$, we see 
that  
${\vec u}^+$ is $p$-characteristic  for $\Ga^+$ at every vertex of $\Gamma,$
as claimed in the Theorem.

 By Theorem \ref{cggraph}: 
\begin{equation}
\sigma(\CQ^*, a\phi) = 
 \frac 2 {p^2} \left( r_p(a{\vec u}^*)( A_{\Ga^*}) r_p(-a{\vec u}^*)^t \right)+ \Sign( A_{\fZ_{\vec {u}^*}} )-e_{\vec {u}^*} - \Sign(A_{\Ga^*}). \label{lol}
 \end{equation}

Using (\ref{lol}) to substitute for  $\si(\CQ^*,a\phi)$ in (\ref{sym}) then simplifying, we obtain 
\begin{align} 
  \sigma_{a/p}(L) =  &\frac 2 {p^2}\left( r_p(a{\vec u}^*) (A_{\Ga^*} ) r_p(-a{\vec u}^*)^t
   +a (p-a) ( \de-f )\right)- \label{as} \\ &\Sign(A_{\Gamma}) +\Sign( A_{\fZ_{\vec {u}^*}} )-e_{\vec {u}^*}.
   \notag \end{align}

One may check
 that the right hand side of (\ref{as})  remains the same if we simultaneously replace all the 
$f_i$ by zero (and thus $f$ by zero),  $\Gamma^*$ by $\Ga^+$, ${\vec u}^*$ by  ${\vec u}^+$, and $e_{\vec {u}^*}$ by $e_{\vec {u}^+}.$ So we make this simplifying change to the left hand side and obtain  
(\ref{asss}).

We also have that  
 $\CQ$ and $\CQ^*$ are related by surgery on a link with $\be_0(L)$ components. One may use the eigenspace Mayer-Vietoris sequences coming from this surgery to see that  
$\eta_p(L) = \eta(\CQ^*,\phi)$. Thus we obtain
\begin{equation} \eta_p(L)= 
  \nullity(A_{
  \fZ_{
  {\vec u}^*
  } 
  })
   +
   |v(\fZ_{ {\vec u}^* })|- 2|   Z_{{\vec u}^*}  |+|v(\Ga^*)|-e_{{\vec u}^*}-1.
 \label{alm}
\end{equation}
The values of  ${\vec u}^*$ on the vertices added to $\Gamma$ to form  
both $\Gamma^*$ and  
$\Gamma^+$ are weighted non-zero. So
$ \nullity(A_{
  \fZ_{
  {\vec u}^*
  } 
  })
= \nullity(A_{
  \fZ_{
  {\vec u^+}
  } 
  })
$, $\fZ_{ {\vec u}^* }=\fZ_{ {\vec u^+} }$,
$Z_{{\vec u}^*}=Z_{{\vec u^+}}$ 
and 
$e_{{\vec u}^*}=e_{{\vec u}^+}$.  In this way, we obtain (\ref{alme}) from   (\ref{alm}). 
\end{proof}

\section{Proofs of  Theorem \ref{form}, and Propositions \ref{RMlk}, \ref{codd},   \ref{ceven}, \ref{scarceodd}, and  \ref{scarceeven}}\label{prove}

\begin{proof}[Proof of Theorem \ref{form}]
We apply Theorem \ref{cggl} with $\CQ=Q$, $L =L(C)= \CL(\Ga( C))$ constructed in  section \ref{al}
Then we let $a=2b$, and note $a{\vec u}^+= b {\vec c}^+$, and observe
$2 \de= \Delta$.  We recall that $\Sign(A_{\Gamma(C)})=2$ by Theorem \ref{pplumb+}. Taking into account  Definition  \ref{defsig}, we obtain:

\begin{align*}
 \sig_{b/q}(C)
 &=  \frac 2 {p^2}\left( r_p(b{\vec c}^+)  (A_{\Ga^+}) r_p(-b{\vec c}^+)^t
   +b (p-2b)\Delta \right)+ 
    \Sign( A_{\fZ_{{\vec c}^+}} )-e_{{\vec c}^+}-2  \\
 \eta_p(C)= &  \nullity(A_{
  \fZ_{
  {\vec c}^+
  } 
  })
   +
   |v(\fZ_{ {\vec c}^+ })|- 2|  v( Z_{{\vec c}^+} ) |+|v(\Ga^+)|-e_{{\vec c}^+}-1.   \end{align*}
   
   In the introduction we used notation which emphasized the role of $p$: we have that $\fZ_{ {\vec c}^+ }=\fZ_{\vec c, p}$,
    $e_{\vec c, p}=e_{{\vec c}^+}$, and $z_{\vec c, p}= v( Z_{{\vec c}^+} )$. 
        In this we way,  we obtain the stated formulas for $ \sig_{b/p}(C)$.    As the Euler characteristic of the tree $\Ga^+$ is one, 
$\fE_{\vec c, p} =|v(\Ga^+)|-e_{\vec c, p}-1.$  We use this to rewrite the formula for $\eta_p(C)$.
      \end{proof}
      
      \begin{proof}[Proof of Proposition \ref{RMlk}] Form an auxiliary curve $C'$ by drawing a parallel curve to each oval of $C$ and, if $C$ is of odd type, add a curve which is near $J$ and meets $J$ in one point. Then by Proposition \ref{Plumblinking}, $\Lk(L(C), L(C'))=-\Delta/2.$ On the other hand according to  \cite[ Prop 7.1]{G2},  $\Lk(L(C), L(C')) = 2(  l+2 (\Pi^- -\Pi^+) + (\Lambda^- -\Lambda^+)).$
   \end{proof}
   
   \begin{proof}[Proof of Propositions \ref{codd} and  \ref{ceven}]
   It follows from Proposition \ref{Plumblinking} that the entries in $\vec c$ corresponding to a vertex of $\Gamma$ is  twice the linking numbers of the fiber
 over the 2-sphere indexed by that vertex with $L(C)$ in $Q$. These linking numbers can be calculated from the statement of Theorem \ref{pplumb+} and the  linking numbers of knots in $Q$ lying over ovals and one-sided curves in $\rp$  as specified in \cite[Remark (3.1)]{G2}.   In this way,  one may calculate  all the entries in $\vec c$ except $\vec c_{u_1}$ and $\vec c_{u_3}$. 
 
 To calculates $\vec c_{u_3}$, we  need the linking number of $J$, the fiber over $u_2$,  with the fiber over $u_3$. This linking number is $\frac 1 2$, and can be calculated as the $(2,3)$ entry of the inverse of 
 \[ -\left(
\begin{array}{cccc}
 1 & 1 & 1 & 1 \\
 1 & 2 & 0 & 0 \\
 1 & 0 & 2 & 0 \\
 1 & 0 & 0 & 2 \\
\end{array}
\right) .\]
We also use a diffeomorphism of $Q$ which interchanges the fibers over $u_2$ and $u_3$ leaving the fibers over other vertices of $\Gamma$  fixed to conclude the  linking number of the fiber over $u_2$ to a components of  $L(C)$ lying over an oval agree with the linking numbers of $J$ with that component.

Finally one can determine $\vec c_{u_1}$ from $\vec c_{u_2}$, $\vec c_{u_3}$, and $\vec c_{R_1}$ and the fact that $\vec c$ is $p$-characteristic for $\Gamma^+$ at $u_3$ for all $p$. Here we use the last claim made in Theorem \ref{cggl}.
\end{proof}

\begin{proof}[Proof of Proposition \ref{scarceodd}] Since ${c}_\fv \ne 0 \mod{p}$  for the added vertices at arrowheads, neither these nor any vertex indexed by an oval  can be in $\fZ_{\vec c, p}$. This leaves only the vertices indexed by regions or  $u_1$ or $u_3$ as possible vertices for  $\fZ_{\vec c, p}$.
 As $\vec c_{R_1}=1$  where $R_1$ is the outer region, $u_1$ is adjacent to a vertex with ${c}_\fv \ne 0 \mod{p}$, and so is not in $\fZ_{\vec c, p}$.
\end{proof}

\begin{proof}[Proof of Proposition \ref{scarceeven}] Regardless of $\Lambda^- $ and $ \Lambda^+$: since ${c}_\fv \ne 0 \mod{p}$  for the added vertices at arrowheads, neither these nor any vertex indexed by an oval  can be in $\fZ_{\vec c, p}$.  This leaves only the vertices indexed by regions  and $u_1$, $u_2$, $u_3$ as possible vertices for  $\fZ_{\vec c, p}$.

If $\Lambda^- \ne \Lambda^+   \pmod{p}$, by proposition  \ref{ceven},  $\vec c_{u_1}=\vec c_{u_2}=\vec c_{u_3} \ne 0 \pmod{p}$.  Thus
$u_1$, $u_2$,  and $u_3$ do not belong to $\fZ_{\vec c, p}$. As  ${R_1}$ is connected to $u_1$,  neither can   $R_1$  belong to $\fZ_{\vec c, p}$.

If $\Lambda^- = \Lambda^+   \pmod{p}$, by proposition  \ref{ceven}  $\vec c_{u_1}=\vec c_{u_2}=\vec c_{u_3}= \vec c_{R_1} = 0 \pmod{p}$.  The condition $\Pi^+_o -\Pi^-_o =1 \pmod{p}$ is equivalent  to $\vec c_{o} = 0.$ The result follows
\end{proof}

\section{Proof  of Theorem \ref{step} }\label{section.step}

\begin{proof}[Proof  of Theorem \ref{step}]
Note that $\Sign(A_{\fZ_{\vec c, p}})-e_{\vec c, p} -2$ takes a constant value on the set of primes which do not divide any nonzero entry of $\vec c$.
Thus these terms can be replaced in the formula for $\sig_{b/p}(C)$ in Theorem \ref{form} by a constant function on complement in $(0,1/2)$ of the set of points whose denominators are primes dividing some non-zero  entry of $\vec c.$. Again  using Theorem \ref{form}, the nullity $\eta_p(C)$ takes the constant value $\nul$ on the set of  primes that do not divide some non-zero entry of $\vec c$. The remaining terms in the formula for $\sig_{b/p}(C)$  in Theorem \ref{form} can be rewritten
to agree with a piecewise polynomial function of $x=b/p$ whose only discontinuities are at rational points whose denominators are divisors of a nonzero entry of $\vec c$. In fact these remaining terms can be rewritten as 
\[ 2 ((x\vec c^+))({A_{\Ga^+}})((-x \vec c^+))^t+2 x(1-2x)\Delta,\]
Here $(( \vec v))$ denotes the entry by entry  value of the function which assigns to a real number that number minus the greatest integer less than that number.
Thus we have that  $\sig_{b/p}(C)$ extends to  a piecewise 
polynomial
function of $x=b/p$ whose only discontinuities are at rational points whose denominators are divisors of a nonzero entry of $\vec c$. 
On the set of rational points with prime denominators which are  relatively prime to the non-zero entries in $\vec c$, this  function must take integral values.
As this set of points is dense, the function is a step function which takes integral values.
\end{proof}

\appendix

\section {Linking numbers}\label{applk}

\begin{prop}\label{linking}
Let $M$ be a rational homology sphere and $W$ be a 4-manifold with boundary $M$. Suppose $K^{(i)}$ for 
$i \in \{1,2\}$  are knots in $M$, and  $K^{(i)} = \partial F^{(i)}$, where the $F^{(i)}$ are surfaces in $W.$
Let $\{G_j\}$ be  a collection of 2-cycles in the interior of $W$ whose homology classes form a basis for $H_2(W,\BQ)$, and $A$ the matrix for the intersection pairing on $H_2(W,\BQ)$ 
with respect to $\{G_j\}$.  Let $\vec k^{(i)}$ be the vector with entries $k^{(i)}_j= G_j \circ F^{(i)}$, then
\[ \Lk(K^{(1)},K^{(2)}) =  F^{(1)}\circ F^{(2)}- \vec k^{(1)} A^{-1} (\vec {k^{(2)}})^t . \]
\end{prop}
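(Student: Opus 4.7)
The plan is to exploit the fact that $M$ is a rational homology sphere: there exist a positive integer $n$ and a rational 2-chain $\Sigma \subset M$ with $\partial \Sigma = n K^{(1)}$. By the very definition of the linking number in $M$, the intersection $\Sigma \circ K^{(2)}$ computed inside $M$ equals $n \Lk(K^{(1)}, K^{(2)})$. Using a collar of $M$ in $W$, push $\Sigma$ slightly into the interior of $W$ to a chain $\tilde \Sigma$ sitting in the collar, and arrange the surfaces so that in this collar $F^{(i)}$ looks like a product $K^{(i)} \times [0, \epsilon]$. Then the rational 2-chain $Z := n F^{(1)} - \tilde \Sigma$ is closed, since its boundary pieces on $M$ (and near $M$) cancel, and hence represents a class in $H_2(W, \BQ)$.

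Write $[Z] = \sum_j a_j [G_j]$ in $H_2(W, \BQ)$. Intersecting with each $G_i$, and noting that $G_i$ lies in the deep interior of $W$ while $\tilde \Sigma$ sits in the thin collar, gives $G_i \circ \tilde \Sigma = 0$, so $n k_i^{(1)} = G_i \circ Z = (A \vec a)_i$, whence $\vec a = n A^{-1} (\vec k^{(1)})^t$. Next intersect $Z$ with $F^{(2)}$. On one hand $F^{(2)} \circ Z = n\, F^{(1)} \circ F^{(2)} - F^{(2)} \circ \tilde \Sigma$; by the product structure in the collar, $F^{(2)} \circ \tilde \Sigma$ is a transverse pushoff of the intersection of $K^{(2)}$ with $\Sigma$ inside $M$, and hence equals $n \Lk(K^{(1)}, K^{(2)})$. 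On the other hand $F^{(2)} \circ Z = \sum_j a_j (F^{(2)} \circ G_j) = \vec k^{(2)} \vec a = n\, \vec k^{(2)} A^{-1} (\vec k^{(1)})^t$.

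Equating the two expressions for $F^{(2)} \circ Z$, dividing by $n$, and using the symmetry of $A^{-1}$ to rewrite $\vec k^{(2)} A^{-1} (\vec k^{(1)})^t$ as $\vec k^{(1)} A^{-1} (\vec k^{(2)})^t$ yields the desired formula. The only delicate point is the collar bookkeeping: verifying that $\tilde \Sigma$ is disjoint from every $G_j$ (true by taking the collar thin enough) and that the intersection $F^{(2)} \circ \tilde \Sigma$ computed in $W$ reduces, with the correct sign, to $\Sigma \circ K^{(2)}$ computed in $M$. Both are standard collar/transversality arguments; granted them, the substantive content of the proof is the linear-algebra manipulation in the basis $\{[G_j]\}$.
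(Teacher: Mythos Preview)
Your proof is correct and follows essentially the same route as the paper: cap off $F^{(1)}$ with a rational Seifert chain for $K^{(1)}$ pushed into a collar, express the resulting closed cycle in the basis $\{[G_j]\}$, and read off the linking number from an intersection computation. The paper's version is marginally more symmetric---it caps off \emph{both} $F^{(i)}$ to closed cycles $z^{(i)}=-S^{(i)}+F^{(i)}$ and computes $z^{(1)}\circ z^{(2)}$ directly---but the content is identical.
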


\begin{proof} Let $S^{(i)}$ be  2-chains in $M$ with $\BQ$ coefficients with boundary $K^{(i)}$.
We consider the  two rational $2$-cycles $z^{(i)}= -S^{(i)}+ F^{(i)}$. We have  that $ G_j \circ z^{(i)} = G_j \circ F^{(i)}= k^{(i)}_j$.
If $\vec b^{(i)}$ denotes the vector $A^{-1}\vec k^{(i)}$,  then $z^{(i)}$ is homologous to $\sum {b^{(i)}_j} G_j$.    It follows that
 \[z^{(1)}\circ  z^{(2)}= \vec k^{(1)} A^{-1} (\vec {k^{(2)}})^t.\] On the other hand, it easy to see geometrically that
 \[z^{(1)}\circ  z^{(2)}= F^{(1)}\circ F^{(2)}-\Lk(K^{(1)},K^{(2)}). \]
 See the schematic picture in Figure \ref{inter} where we have pushed $S^{(2)}$ further into $W$ than $S^{(1)}$ with respect to the collar structure for the boundary of $W$.
\end{proof}

\begin{figure}[h]
\includegraphics[width=2in]{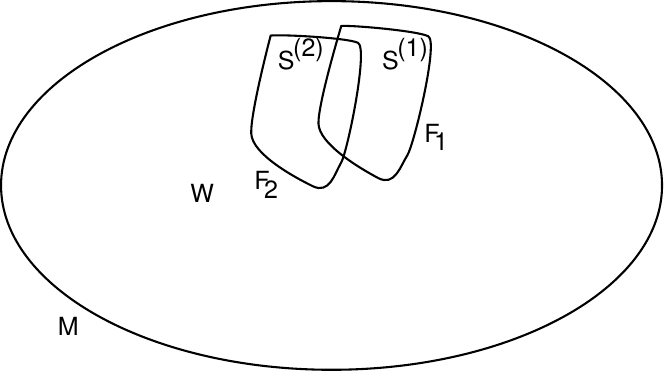} 
\caption{} \label{inter}
\end{figure}

The well-known formula for linking numbers of links in surgery descriptions \cite[Theorem 3.1]{CK} or \cite[Corollary 1.2)]{PY} is a special case of the Proposition \ref{linking}. A further special case is:

\begin{prop}\label{Plumblinking} Suppose a rational homology sphere $M$ is the boundary of plumbing $W_\Gamma$  along a tree $\Gamma$ with associated intersection matrix  $A_\Gamma$. The linking number of the fiber over the $i$th sphere  the fiber over the $j$th  sphere is the $(i,j)$ entry of $-{(A_\Ga)}^{-1}$. This also gives the linking number of two distinct fibers when $i=j$.
\end{prop}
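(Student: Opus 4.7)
The plan is to apply Proposition \ref{linking} directly with an almost canonical choice of data. Let $K^{(i)}$ be a fiber over a chosen base point $x_i$ in the $i$-th sphere of the plumbing, where the points $x_i$ are selected to avoid all plumbing sites (and, when $i=j$, so that the two fibers come from two distinct base points $x_i \ne x_i'$). Let $F^{(i)}$ be the disk fiber over $x_i$ inside the $i$-th disk bundle; its boundary is $K^{(i)}$. For the basis $\{G_j\}$ of $H_2(W_\Gamma,\BQ)$ take the zero sections (the 2-sphere bases), whose intersection matrix is $A_\Gamma$ by the very definition of the plumbing.

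First I would compute $\vec k^{(i)}$. The intersection $G_j \circ F^{(i)}$ is the count of intersections of the $j$-th zero section with the $i$-th disk fiber. If $j=i$, the zero section meets its own fiber transversely in the single point $x_i$, giving $k^{(i)}_i = 1$. If $j \ne i$, the $j$-th zero section and the $i$-th disk bundle meet only at the plumbing site(s) where the $i$-th and $j$-th sphere cross, which is a single point on each of the relevant bases; since $x_i$ is chosen away from all plumbing sites, $F^{(i)}$ misses this locus entirely, so $k^{(i)}_j = 0$. Hence $\vec k^{(i)} = \vec e_i$, the $i$-th standard basis vector.

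Next I would compute $F^{(i)} \circ F^{(j)}$. When $i \ne j$ the two disks live in distinct disk bundles that overlap only over plumbing sites, and since both $x_i$ and $x_j$ avoid those sites the disks are disjoint. When $i=j$ but the fibers are over distinct points $x_i \ne x_i'$, the two disks are parallel fibers of the same disk bundle and are disjoint. Either way $F^{(i)}\circ F^{(j)} = 0$.

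Plugging into Proposition \ref{linking} gives
\[ \Lk(K^{(i)},K^{(j)}) = 0 - \vec e_i \, A_\Gamma^{-1} \, \vec e_j^{\,t} = -(A_\Gamma^{-1})_{ij}, \]
which is the desired formula, valid for distinct fibers over the same sphere as well. No real obstacle is expected; the only thing to be careful about is that the disk fibers and zero sections can all be put into general position with respect to the plumbing sites, which is automatic since those sites are a finite set of points on each sphere base.
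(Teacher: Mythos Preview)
Your argument is correct and is exactly the approach the paper takes: the paper simply declares Proposition~\ref{Plumblinking} to be a special case of Proposition~\ref{linking} (via the standard surgery/plumbing linking formula), and you have accurately supplied the routine verification that the disk fibers $F^{(i)}$ and zero-section basis $\{G_j\}$ give $\vec k^{(i)}=\vec e_i$ and $F^{(i)}\circ F^{(j)}=0$.
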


Another special case is when $W$ is a rational ball so that the term $\vec k^{(1)} A^{-1} \vec {k^{(2)}}^t$ vanishes. This was used in \cite{G2} to calculate linking numbers in the projective tangent bundle and circle tangent bundle of $\rp.$

\section{Mathematica Program to compute $\sig_{b/p}(C)$ and $\eta_{p}(C)$}\label{sd} 

\begin{footnotesize}
\begin{lstlisting}
F=Function[T,
  w={1,2,2,2};e={{1,2},{1,3},{1,4}};s={0,0,0,0}; n=4;
  TT = T - (m=s[[2]]=Coefficient[T,J])*J;
  branch[ Pos[TT], 4, 1 ];
  A=Table[0,{n},{n}];    (* A = adjacency matrix of weighted graph Gamma *)
  Do[ A[[i,i]] = w[[i]],{i,n}];
  Do[ i=e[[ie,1]]; j=e[[ie,2]]; A[[i,j]]=A[[j,i]]=1,{ie,Length[e]}];
  cc = c = -2Inverse[A].s;
  AA=Table[0,{m+n},{m+n}]; Do[AA[[i,j]]=A[[i,j]],{i,n},{j,n}];
           (* AA = adjacency matrix of weighted graph Gamma+ *)
  nn=n; ee=e;
  Do[If[Not[s[[i]]==0],
    AppendTo[cc,2s[[i]]]; nn++;AA[[i,nn]]=AA[[nn,i]]=1;
     AppendTo[ee,{i,nn}]], {i,n}];
  De = 2s.Inverse[A].s;
  h = Mod[cc,p];
  X={};  (* X will be the complement of Z_p in Gamma+  *)
  zp=0;
  Do[
    If[ h[[i]]==0, zp++;
      Do[
        If[j==i,Continue[]];
        If[AA[[i,j]]==0,Continue[]];
        If[h[[j]]==0,Continue[]];
        AppendTo[X,{i}];Break[], (*j<>i & h[[j]]<>0 & [i,j] in e*)
      {j,nn}],
      AppendTo[X,{i}]   (* h[[i]].ne.0, hence i not in Z_p *)
    ],
  {i,nn}];
  n0 = nn - Length[X];          (* n0 = |v(Z_p)| *)
  A0 = If[n0==0, {}, Delete[Transpose[Delete[AA,X]],X]]; (*A0 = A_{Z_p}*)
  e0 = 0;  (* e0 = e_p *) (* E_p=1-nn+e0, as Gamma+ is a tree *)
  Do[ If[ Not[ h[[ee[[ie,1]]]]*h[[ee[[ie,2]]]]==0 ], e0++ ], {ie,Length[ee]}];
  mu = Mod[b*h,p].AA.Mod[-b*h,p];
  sn = SiNu[A]; sn0 = If[n0==0,{0,0},SiNu[A0]];
  sigbp = 2/p^2(mu+b(p-2b)De) + sn0[[1]] -e0 - 2;
  etap = sn0[[2]] + n0 - 2zp + nn - e0 - 1;
  {sigbp,etap}
];
SiNu=Function[A,Module[{ev,i,sg=0,nu=0},
  ev=Re[Eigenvalues[A]];
  Do[If[ Abs[ev[[i]]]<10.^(-7), nu++, sg += Sign[ev[[i]]] ],  
   {i,Length[ev]}];{sg,nu}
]];

branch = Function[{R,k,d},Module[{T,t,fl,i,j,len},
  If[ Not[AtomQ[R]],
    T=R[[1]]+zero; len=Length[T];
    Do[
      If[IntegerQ[T[[i]]-zero],Continue[]];
      fl=FactorList[T[[i]]]; t=fl[[2,1]];
      Do[
        e=Join[e,{{k,n+1},{n+1,n+2}}]; w[[k]]-=2;
        sg=If[AtomQ[t],t,t[[0]]];
        w=Join[w,{0,2}]; s=Join[s,{If[IntegerQ[sg-Pos],1,-1](-1)^(d),0}];
        n+=2; m++; branch[t,n,d+1],
      {j,fl[[1,1]]}],
    {i,len}];
  ];
]]
\end{lstlisting}
\end{footnotesize}

 The  complex scheme $\left< J \amalg 1^- \left<2^-\right> \amalg 2^+ \right>$, for instance is inputted as
  $J+\text{Neg}[2 \text{Neg}]+2 \text{Pos}$. Here is a sample computation:
 
In[1]= p=7; b=2; F[J+\text{Neg}[2 \text{Neg}]+2 \text{Pos}]

Out[1]= \{-10,1\}
\section {Behavior  of $\sig$ and $\nul$ for schemes of even type }\label{even1/4}

\begin{thm} If $C$ is a non-empty complex scheme of even type, then
$$\sig_x(C)= \sig_{\frac 1 2 -x}(C).$$
Thus $\sig_x(C)$ does not have a discontinuity at $x=\frac 1 4$.
 \end{thm}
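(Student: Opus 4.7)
The approach will be to plug the explicit description of $\sig_x(C)$ from the proof of Theorem \ref{step} into both sides and use the fact that, for $C$ of even type, every entry of $\vec c^+$ is even. Concretely, at points of continuity Theorem \ref{form} combined with the identification $r_p(b\vec c^+)/p = ((x\vec c^+))$ (entrywise fractional part, $x = b/p$) yields
\[
\sig_x(C) = 2\,((x\vec c^+))\,A_{\Gamma^+}\,((-x\vec c^+))^t + 2x(1-2x)\Delta + \bigl(\Sign(A_{\fZ_{\vec c,\infty}}) - e_{\vec c,\infty} - 2\bigr).
\]
By Theorem \ref{step}, this formula represents the step function $\sig_x(C)$ on the complement of a finite set of rationals whose denominators divide a non-zero entry of $\vec c$.

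The decisive input is Proposition \ref{ceven}: if $C$ has even type, every entry of $\vec c$ is even, and the arrowhead entries appended to form $\vec c^+$ are each $\pm 2$. Thus $\vec c^+ = 2\vec d$ for some integer vector $\vec d$, and in particular $\tfrac{1}{2}\vec c^+ \in \BZ^{|v(\Gamma^+)|}$. The plan is to substitute $x \mapsto \tfrac{1}{2}-x$ into the displayed formula: from integrality of $\tfrac{1}{2}\vec c^+$ one reads off $(\tfrac{1}{2}-x)\vec c^+ \equiv -x\vec c^+$ and $-(\tfrac{1}{2}-x)\vec c^+ \equiv x\vec c^+$ modulo $\BZ^{|v(\Gamma^+)|}$, so the two fractional-part vectors are simply swapped. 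Symmetry of $A_{\Gamma^+}$ then shows the quadratic form piece is invariant. A one-line calculation gives $(\tfrac{1}{2}-x)(1-2(\tfrac{1}{2}-x)) = x(1-2x)$, so the $\Delta$-term is invariant, and the constant piece does not involve $x$. This proves $\sig_{\tfrac{1}{2}-x}(C) = \sig_x(C)$ on a dense subset of $(0,\tfrac{1}{2})$; since both sides are step functions with only finitely many discontinuities, the identity extends to all $x \in (0,\tfrac{1}{2})$. The finite set of discontinuities and ambiguous points is automatically invariant under $x \mapsto \tfrac{1}{2}-x$ (denominators dividing entries of $\vec c$ are unchanged), so the averaging convention for ambiguous values respects the symmetry.

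The continuity statement at $x = \tfrac{1}{4}$ is then a formal consequence: $\tfrac{1}{4}$ is the fixed point of $x\mapsto \tfrac{1}{2}-x$, and
\[
\lim_{x\to 1/4^-}\sig_x(C)\;=\;\lim_{x\to 1/4^-}\sig_{1/2-x}(C)\;=\;\lim_{y\to 1/4^+}\sig_y(C),
\]
so the one-sided limits at $\tfrac{1}{4}$ coincide and the step function has no jump there. The only technical point requiring attention is the book-keeping of fractional parts when a coordinate of $x\vec c^+$ hits an integer; but these exceptional $x$ form a discrete set, they correspond exactly to the allowed discontinuities of $\sig_x$, and they are preserved by $x \mapsto \tfrac{1}{2}-x$, so no serious obstacle arises.
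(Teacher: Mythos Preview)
Your argument is correct and takes a genuinely different route from the paper's. The paper observes that for even type the link $L(C)$ is null-homologous in $Q$, so $\sigma_{a/p}(L(C))$ can be computed as the classical Levine--Tristram signature $\sigma_\omega$ attached to a Seifert matrix $V$; the desired symmetry then reduces to the standard identity $\sigma_\omega=\sigma_{\omega^{-1}}$, since $\sig_x(C)=\sigma'_{e^{4\pi i x}}$ and $e^{4\pi i(1/2-x)}=e^{-4\pi i x}$. Your approach instead stays entirely within the plumbing formalism: you exploit Proposition~\ref{ceven} (together with the fact that the arrowhead entries of $\vec c^+$ are $\pm 2$) to see that $\tfrac{1}{2}\vec c^+$ is an integer vector, whence $x\mapsto \tfrac{1}{2}-x$ simply swaps the fractional-part vectors $((x\vec c^+))$ and $((-x\vec c^+))$ in the explicit formula coming from Theorem~\ref{form}; symmetry of $A_{\Gamma^+}$ and the elementary identity $(\tfrac{1}{2}-x)(2x)=x(1-2x)$ then finish the job. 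The paper's argument is more conceptual and situates the result as an instance of a classical link-signature identity; yours is self-contained within the paper's own algorithm, requires neither the null-homologous observation nor Seifert-matrix input, and makes transparent precisely which arithmetic feature of the even-type case (evenness of every entry of $\vec c^+$) is responsible for the symmetry.
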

 \begin{proof}  We note that $L(C)$ is null homologous, so there is a Seifert surface for $L$ and a corresponding Seifert matrix $V$. We have  the 
ordinary signature function  $\sigma_\omega $ given by the signature of $V_\omega$ where  $V_\omega= (1-\omega)V+ (1- \omega^{-1})V^T$, and $\omega$ lies on the unit circle. 
We have that $\sigma_\omega = \sigma_{\omega^{-1}}$.
 Let $\sigma'_\omega$ denote $\sigma_\omega$ redefined at jumps to be the average of the one sided limits
 except when jumps occur at  $p$th roots of unity, where $p$ is an odd prime. 
  Moreover $\sigma_{a/p}(L(C))= \sigma_{e^{2 \pi i a/p}}.$ As points of the form $e^{2 \pi i a/p}$ are dense and using the  re-parameterization in the definition of $\sig_{b/p}$, we have
$\sig_{x}(C) = \sigma'_{e^{4 \pi i x}}L(C))$. Thus
$$\sig_{\frac 1 2 -x}(C) = \sigma'_{e^{2 \pi i -4 \pi i x}}(L(C))= \sigma'_{e^{-4 \pi i x}}(L(C))=\sigma'_{e^{4 \pi i x}}(L(C))=\sig_{x}(C) .$$
Thus $\sig_x(C)$ is symmetric around $1/4$, and so will be continuous at $\frac 1 4.$
\end{proof}

If $C$ is a complex scheme  of even type, let $\sig(C)$ denote $\sig_{1/4}(C)$.  
We let $n(C)$  denote the number of ovals of $C$ with odd parity. These are called odd ovals.
The number of odd ovals which are empty is denoted  $n_+(C)$.
The number of odd ovals which have one oval immediately inside them is denoted  $n_0(C)$.
The number of odd ovals which have more than one oval immediately inside them is denoted   $n_-(C)$. Note that the subscripts reflect the Euler characteristic of the region bounding the oval from the inside.
As usual, we drop the $C$ from the notation for numerical characteristics of $C$ when it should cause no confusion.

\begin{thm} Suppose $C$ is a non-empty complex scheme of even type.
If $l$ is even, 
 $$|\sig+ n+n_0+2n_ - -2(\Pi_+-\Pi_-)|+ \nul \le 
 l-n_+ -n_- -1.$$
 If  $l$ is odd and $C$ has just one outer oval,
  $$|\sig+ n+n_0+2n_ - -2(\Pi_+-\Pi_-)|+ \nul \le
 l-n_+ -n_-+1 . 
 $$
 If $l$ is odd and $C$ has more than one outer oval, 
 $$|\sig+ n+n_0+2n_ - -2(\Pi_+-\Pi_-)+1|+ \nul \le    l-n_+ -n_-
 .$$
 Also
 $$\sig + \nul \equiv l -1 \pmod{2}.$$
 \end{thm}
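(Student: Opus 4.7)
The parity congruence $\sig+\nul\equiv l-1\pmod 2$ is immediate from Theorem~\ref{step2}. Since $C$ is of even type, $L(C)$ has exactly $l$ components, so $\beta_0(L(C))=l$; applying~\eqref{snc2} with $\sig_{b/p}(C)=\sig_{2b/p}(L(C))$ and $\eta_p(C)$ for any prime $p$ coprime to every nonzero entry of $\vec c$ and with $b/p$ in the step-function interval containing $1/4$, and using the preceding theorem that $\sig_x(C)$ is continuous at $x=1/4$ together with the $p$-stability statement of Theorem~\ref{step} ($\eta_p(C)=\nul(C)$ for such $p$), gives $\sig(C)+\nul(C)\equiv l-1\pmod 2$.

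For the three main inequalities my plan is to compute $\sig(C)$ and $\nul(C)$ explicitly from Theorem~\ref{form} with $b/p$ in the same step-function interval. Every nonzero entry of $\vec c$ is an explicit multiple of $2$ by Proposition~\ref{ceven}, so $r_p(b\vec c^+_\fv)$ is a known residue of $p$ in the limit $p\to\infty$; summing $\frac 2 {p^2}\, r_p(b\vec c^+)A_{\Gamma^+}r_p(-b\vec c^+)^t$ over the edges of $\Gamma^+$ and combining with $\frac 2{p^2}\,b(p-2b)\Delta$ (using Proposition~\ref{RMlk}) gives a closed-form expression in the $\Lambda^\pm_R$, $\Pi^\pm_o$, and parity data. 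Grouping the contributions of the edges incident to an odd oval according to the Euler characteristic of its interior region (empty, singly filled, or multiply filled) produces precisely the shift $-(n+n_0+2n_-)+2(\Pi_+-\Pi_-)$; the remaining edges, together with the correction $\Sign(A_{\fZ_{\vec c,\infty}})-e_{\vec c,\infty}-2$ (which by Proposition~\ref{scarceeven} is computable from purely local data at $u_1,u_2,u_3,R_1$ and the non-outer regions), contribute a quantity bounded in absolute value by the number of ovals not of odd-empty or odd-multi-filled type, namely $l-n_+-n_-$. The three stated bounds then arise from this decomposition, with the $\pm 1$ adjustment reflecting Proposition~\ref{scarceeven}'s distinction between the subcases $\Lambda^-\not\equiv\Lambda^+\pmod p$ and $\Lambda^-\equiv\Lambda^+\pmod p$, together with whether the $R_1$--$u_1$ edge lies in $\fZ_{\vec c,\infty}$. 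These distinctions are controlled precisely by the parity of $l$ and by the number of outer ovals.

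The main obstacle will be the case analysis needed to pin down the three distinct $\pm 1$ corrections. Specifically, the first two cases differ from the third in whether the shift sits inside or outside the absolute value, which reflects a sign forced by the outer-oval structure. This requires tracking how the arrows at the outer ovals interact with $R_1$ and $u_1$ when one evaluates $\Sign(A_{\fZ_{\vec c,\infty}})-e_{\vec c,\infty}$, and separately handling the nullity contributions from $\fE_{\vec c,\infty}$ and $|v(\fZ_{\vec c,\infty})|-2z_{\vec c,\infty}$ via Proposition~\ref{scarceeven}. Once the local contributions are tabulated, the three inequalities follow by summing the edge-by-edge contributions and applying the triangle inequality.
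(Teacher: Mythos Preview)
Your handling of the parity congruence via \eqref{snc2} is correct and essentially what the paper does (the paper also recovers it from the jump argument below, but either route is fine).

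For the three inequalities, however, your route is genuinely different from the paper's, and as written it has a real gap. The paper does \emph{not} evaluate the plumbing formula of Theorem~\ref{form} near $x=1/4$ at all. Instead it works with the Seifert matrix $V$ for the null-homologous link $L(C)$ and looks at the single point $\omega=-1$. The quantities $\sigma_{-1}=\Sign\big((1+1)V+(1+1)V^t\big)$ and $\eta_{-1}=\nullity\big((1+1)V+(1+1)V^t\big)$ were computed in \cite{G5} from an explicit (unoriented) spanning surface for $L(C)$ in $Q$; after a framing and linking-number correction they come out to exactly the expressions in the theorem: for $l$ even one gets $\sigma_{-1}=2(\Pi_+-\Pi_-)-n-n_0-2n_-$ and $\eta_{-1}=l-n+n_0-1=l-n_+-n_--1$, and similarly in the other two cases. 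The inequality is then a one-line eigenvalue perturbation: as $\omega$ moves off $-1$, the nonzero eigenvalues of $V_\omega$ keep their sign while some of the $\eta_{-1}$ zero eigenvalues may become nonzero, so
\[
|\sig-\sigma_{-1}|\le \eta_{-1}-\nul,
\]
which is exactly the stated bound after substituting for $\sigma_{-1}$ and $\eta_{-1}$.

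Your plan tries instead to decompose the edge sum in Theorem~\ref{form} at $b/p\to 1/4$ into a fixed shift $-(n+n_0+2n_-)+2(\Pi_+-\Pi_-)$ plus a remainder bounded (together with $\nul$) by $l-n_+-n_-$. Neither the decomposition nor the bound is actually carried out; both are simply asserted. There is no explanation of \emph{why} the remainder should satisfy $|\text{remainder}|+\nul\le l-n_+-n_-\pm 1$, nor of how ``applying the triangle inequality'' produces such a coupling between signature remainder and nullity. Note that the right-hand sides in all three cases are precisely the values of $\eta_{-1}$: this is a strong hint that the correct mechanism is the nullity drop at $\omega=-1$, not an edge-by-edge estimate. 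Finally, the case split you propose---via Proposition~\ref{scarceeven}, according to whether $\Lambda^-\equiv\Lambda^+\pmod p$ and whether the $R_1$--$u_1$ edge lies in $\fZ_{\vec c,\infty}$---is not shown to match the actual case split in the theorem, which is governed by the parity of $l$ and the number of outer ovals; that correspondence comes from the Seifert-surface computation in \cite{G5}, not from the plumbing data. Without these missing computations your argument does not establish the inequalities.
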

 \begin{proof}
We have that $\sig$ is the limit of $\si_\omega$ on the unit circle as $ \omega$ approaches 
$-1$. $\nul$ is the $\eta_\omega$ for $\omega$ near $-1$. 

In \cite[sections  4 and 5]{G5}, a  signature \cite{G3} for links, denoted  $S_0(L)$,  and a nullity (denoted    $ \eta_0(L)$ ) is computed   for $L=L(C)$. 
Here the subscript $0$ refers to the zero homology class of $H_1(Q)$.
We have \begin{equation} \label{sa} \sigma_{-1}(L(C))= S_0(L)-l+2(\Pi_+ -\Pi_-)  \end{equation} 
 and 
\begin{equation} \label{na} \eta_{-1}(L(C))= \eta_0(L).   \end{equation}
Above, the $-l$ is to correct for the choice of framing and we  subtract the total linking number of  $L(C)$ which is $2(\Pi_- -\Pi_+)$ to convert between  signature conventions using  oriented spanning surfaces links and unoriented spanning surfaces.  The difference between these signatures is well explained in \cite[p63]{GL}.
Using \cite[Prop 4.2 , 5.1]{G5},  \ref{sa} and \ref{na}, we write out $\sigma_{-1}(L(C))$ and $\eta_{-1}(L(C))$.
Thus, if  $l$ is even, 
$$\sigma_{-1}(L(C))= 2(\Pi_+-\Pi_-)- n -n_0-2n_- $$
and 
$$ \eta_{-1}(L(C))=l -n + n_0 -1.$$
If $l$ is odd, and $C$ has just one outer oval, $$\sigma_{-1}(L(C))=  2(\Pi_+-\Pi_-) -n -n_0-2n_-$$ and $\eta_{-1}(L(C))= l-n+ n_0+1$.
If $l$ is odd, and $C$ has more than one outer oval, $$\sigma_{-1}(L(C))=  2(\Pi_+-\Pi_-)-n-n_0-2n_- -1 $$ with $\eta_{-1}(L(C))=  l-n +n_0$.

Now consider the eigenvalues of the continuous family of Hermitian matrices $V_\omega$. The non-zero eigenvalues at $\omega=-1$ must keep their signs  as $\omega$ is perturbed from $-1$  but the zero eigenvalues can suddenly become non-zero but $\nul$ of these zero eigenvalues must remain unchanged. This follows from the continuity of the characteristic polynomial of $V_\omega$ and the continuity of the roots of a continuous family of polynomials
\cite[Appendix V]{W}.
Finally the change in the nullity must be congruent modulo two to the change in
the signature as we move away from $-1$.
\end{proof}

 \end{document}